\documentclass[a4paper,12pt]{amsart}
\usepackage{amsmath}
\usepackage{amsfonts}
\usepackage{amssymb}
\usepackage{setspace}
\usepackage{amsthm,mathrsfs,textcomp}

\newtheorem*{thA}{Theorem A}
\newtheorem*{thB}{Theorem B}

\theoremstyle{plain}
\newtheorem{theorem}{Theorem}[section]
\newtheorem{cor}[theorem]{Corollary}
\newtheorem{prop}[theorem]{Proposition}

\newtheorem{lemma}[theorem]{Lemma}

\theoremstyle{definition}
\newtheorem{remark}[theorem]{Remark}

\newtheorem{definition}[theorem]{Definition}

%
%

\begin{document}

\title[Nonstandard Nullstellensatz]{Arithmetic Nullstellensatz and Nonstandard Methods}
\author{Haydar G\"oral}
\address{Universit\'e de Lyon CNRS, Universit\'e Lyon 1,Institut
Camille Jordan UMR5208, 43 boulevard du 11 novembre 1918, F--69622
Villeurbanne Cedex, France.}
\email{goral@math.univ-lyon1.fr}

\keywords{Model Theory, Nonstandard Analysis, Arithmetic Nullstellensatz, 
height, primality, polynomial ring, UFD, valuation}
\subjclass{11G50, 03H05, 03C98, 13L05}

\begin{abstract}
In this study we find height bounds
for polynomial rings over integral domains.
We apply nonstandard methods and hence our constants will be ineffective.
Then we find height bounds in the polynomial ring over algebraic numbers 
to test primality of an ideal.
Furthermore we consider unique factorization domains and
possible bounds for valuation rings and arithmetical functions.
\end{abstract}

\maketitle

\onehalfspacing

\section{Introduction}
The arithmetic version of the Nullstellensatz states that 
if $f_1,...,f_s$ belong to $\mathbb Z[X_1,...,X_n]$ without a common zero in $\mathbb C$, then
there exist $a$ in $\mathbb Z \setminus \{0\}$ and
$g_1,...,g_s$ in $\mathbb Z[X_1,...,X_n]$
such that $a=f_1g_1+...+f_sg_s.$ Finding degree and height bounds for $a$ and 
$g_1,...,g_s$ has received continuous attention using computational methods.
By $\deg f$, we mean the total degree of  the polynomial $f$ in several variables.
T. Krick, L. M. Pardo and
M. Sombra \cite{KPS} prove that:
If $f_1,...,f_s$ 
are as above with
$D:=\displaystyle\max_{i} \deg (f_i)$ and $H:=\displaystyle\max_{i} h(f_i)$
where $h(f_i)=$ logarithm of the maximum module of its coefficients, then
there exist $a \in \mathbb Z \setminus \{0\}$ and $g_1,...,g_s \in \mathbb Z[X_1,...,X_n]$
such that
\begin{itemize}
\item[(i)] $a=f_1g_1+...+f_sg_s$
\item[(ii)] $\deg(g_i) \leq 4nD^n$
\item[(iii)] $h(a),h(g_i) \leq 4n(n+1)D^n(H+\log s + (n+7)\log (n+1)D).$ 
\end{itemize}

This result is sharp and efficient.
For similar results we refer the reader to \cite{Ber, BS}.

On the other hand finding bounds in mathematics using nonstandard extensions
have been studied often, for example:
Given a field $K$,
if $f_0,f_1,...,f_s$ in $K[X_1,...,X_n]$ all have degree less than $D$ and
$f_0$ in $\langle f_1,...,f_s \rangle$, then
$f_0=\displaystyle\sum_{i=1}^{s}f_ih_i$ for certain ${h_i}$ whose degrees are bounded
by a constant $C=C(n,D)$ depending only on $n$ and $D$.
This result was first established in a paper of G. Hermann \cite{Her} using algorithmic tools.
Then the same result was proved by L. van den Dries and K. Schmidt \cite{DS} using
nonstandard methods, and they paved the way for how nonstandard methods can be used
for such bounds. Their work in \cite{DS} influenced us
to apply nonstandard methods in order to prove the existence of
bounds for the complexity of the coefficients of $h_i$ as above by taking $f_0=1.$
We also define an abstract height function which generalizes
the absolute value function
and measures the complexity of the coefficients of polynomials over 
$R[X_1,...X_n]$, where $R$ is an integral domain.
Using nonstandard methods, we will generalize the result of  \cite{KPS}
to any integral domain and height function and furthermore our constant $c_2$
for the height function does not depend on $R$ or $s$,
but it is ineffective.
\\

Let $K$ be a field and $I$ an ideal of $K[X_1,...,X_n].$
We say that $I$ is a $D$-type ideal if the degree of all the generators of $I$ is bounded by $D.$
By \cite{DS} it is known that there is a bound $B(n,D)$
such that if $I$ is a $D$-type ideal then $I$ is prime
iff $1 \notin I$, and for all $f$, $g$ in $K[X_1,...,X_n]$ of degree less than $B(n,D)$, if 
$fg \in I$ implies $f$ or $g$ is in $I.$
Here we show that it is enough to check the primality up to a certain height bound.

Let $\overline{\mathbb Q}$ be the set of algebraic numbers.
We say that an ideal $I$ of $\overline{\mathbb Q}[X_1,...,X_n]$ is a $(D,H)$-type ideal if it
is a $D$-type ideal and the logarithmic height of all generators of $I$ is bounded by $H.$
\\

We assume that all rings are commutative with unity.
Moreover throughout this article $R$ stands
for an integral domain and $K$ for its field of fractions.
The symbol $h=h_R$ denotes a height function on $R$ which will be defined in the next section.
We prove the following theorems:
\begin{thA} Let $R$ be a ring with a height function.
For all $n \geq 1$, $D \geq 1$, $H \geq 1$ there are two constants $c_1(n,D)$ and $c_2(n,D,H)$
such that if $f_1,...,f_s$ in $R[X_1,...X_n]$ have no common zero in $K^{alg}$ with
$\deg(f_i) \leq D$ and $h(f_i) \leq H$, then there exist nonzero $a$ in $R$ and
$h_1,...,h_s$ in $R[X_1,...X_n]$ such that
\begin{itemize}
\item[(i)] $a=f_1h_1+...+f_sh_s$
\item[(ii)] $\deg(h_i) \leq c_1$
\item[(iii)] $h(a),h(h_i) \leq c_2.$
\end{itemize}

\end{thA}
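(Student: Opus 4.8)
The plan is to extract the degree bound $c_1$ and the representation itself from the classical uniform bound for the ideal--membership problem, and to obtain the height bound $c_2$ by an ultraproduct (compactness) argument over the class of all integral domains carrying a height function --- which is exactly why $c_2$ will be ineffective. First I would handle $c_1$ and reduce the number of polynomials. If $f_1,\dots,f_s\in R[X_1,\dots,X_n]$ have $\deg f_i\le D$, $h(f_i)\le H$ and no common zero in $K^{alg}$, then by the weak Nullstellensatz $1\in\langle f_1,\dots,f_s\rangle$ over $K^{alg}$, and since ideal membership in a polynomial ring is unchanged under a field extension, $1\in\langle f_1,\dots,f_s\rangle$ already over $K$. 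By the uniform bound of Hermann \cite{Her}, reproved in \cite{DS}, there is $c_1=c_1(n,D)$ depending on nothing but $n,D$, together with $g_i\in K[X_1,\dots,X_n]$ of degree $\le c_1$ with $\sum_i g_if_i=1$; clearing denominators yields $0\ne a\in R$ and $h_i=ag_i\in R[X_1,\dots,X_n]$ of degree $\le c_1$ with $a=\sum_i f_ih_i$, so (i) and (ii) hold. Moreover $\sum_i g_if_i=1$ is a consistent linear system over $K$ with a number $M=M(n,D)$ of equations depending only on $n,D$, hence has a solution with at most $M$ nonzero coefficients; the $f_i$ with $g_i\ne 0$ then form a subfamily of size $\le M$ with no common zero. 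Applying the (still to be proved) height bound to such a subfamily and setting the remaining $h_i$ to $0$ shows that it suffices to produce $c_2=c_2(n,D,H)$ under the extra assumption $s\le M(n,D)$; this is also why $c_2$ need not depend on $s$.

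Next I would run the compactness argument. Fix $n,D,H$ and $c_1=c_1(n,D)$, and suppose no $c_2(n,D,H)$ exists. Then for each $k\in\mathbb N$ there are an integral domain $R_k$ with fraction field $K_k$ and height function $h_k$, and polynomials $f_1^k,\dots,f_s^k\in R_k[X_1,\dots,X_n]$ (where, after the reduction above and a pigeonhole, $s\le M(n,D)$ is independent of $k$) of degree $\le D$ and height $\le H$ having no common zero in $K_k^{alg}$, such that \emph{every} representation $a=\sum_i f_i^kh_i$ with $0\ne a\in R_k$, $h_i\in R_k[X_1,\dots,X_n]$ and $\deg h_i\le c_1$ satisfies $\max_i(h_k(a),h_k(h_i))>k$. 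Fix a nonprincipal ultrafilter $\mathcal U$ on $\mathbb N$ and set $R^*=\prod_k R_k/\mathcal U$, an integral domain with fraction field $K^*=\prod_k K_k/\mathcal U$; let $h^*$ be the induced map of $R^*$ into $\prod_k\mathbb R/\mathcal U$, and let $f_i^*\in R^*[X_1,\dots,X_n]$ have coefficients the $\mathcal U$--classes of the coefficients of the $f_i^k$, so $\deg f_i^*\le D$ and $h^*(f_i^*)\le H$.

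Now I would transfer the two relevant facts and collide them. First, by the degree bound in each $K_k$ the first--order statement ``$\exists g_i\in K_k[X_1,\dots,X_n]$ of degree $\le c_1$ with $\sum_i g_if_i^k=1$'' (the $g_i$ ranging over a fixed finite--dimensional coefficient space, so this is first--order in the coefficients of the $f_i^k$) holds for all $k$, hence by {\L}o\'s's theorem the degree--$\le c_1$ representations of $1$ by the $f_i^*$ form a nonempty affine subspace over $K^*$. Second, the displayed bad--height property of the $k$--th instance is a single first--order statement about $(R_k,h_k)$ with the coefficients of the $f_i^k$ and the real number $k$ as parameters, so by {\L}o\'s's theorem its ultraproduct holds in $(R^*,h^*)$ with parameter $r^*:=[k]_{\mathcal U}$, which exceeds every standard real; thus every admissible representation $a^*=\sum_i f_i^*h_i^*$ over $R^*$ has $\max_i(h^*(a^*),h^*(h_i^*))>r^*$, i.e. infinite height. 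To contradict this I would produce one representation of finite height: using the axioms of a height function (which pass to $(R^*,h^*)$ with uniform constants) the elements of $R^*$ of finite height form a subring $R^*_{\mathrm{fin}}$ containing all coefficients of the $f_i^*$; since a consistent linear system over a field stays consistent over a subfield, the degree--$\le c_1$ representations of $1$ by the $f_i^*$ already have coefficients in $\mathrm{Frac}(R^*_{\mathrm{fin}})$, and clearing denominators inside $R^*_{\mathrm{fin}}$ gives $0\ne a^*\in R^*_{\mathrm{fin}}$ and $h_i^*\in R^*_{\mathrm{fin}}[X_1,\dots,X_n]$ of degree $\le c_1$ with $a^*=\sum_i f_i^*h_i^*$ --- a representation of finite height, the desired contradiction. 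Hence $c_2(n,D,H)$ exists, and together with the first step this proves Theorem A.

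I expect the main obstacle to be this last step: one must verify that the ultraproduct $(R^*,h^*)$ genuinely behaves like a height--function ring --- in particular that the multiplicative and additive constants occurring in the axioms of a height function are uniform over the whole class, so that $R^*_{\mathrm{fin}}$ is a subring and (equivalently, via Cramer's rule) determinants of bounded--size matrices with entries of bounded height again have bounded height --- and one must carry out the reduction in $s$ with care. The remaining ingredients (the weak Nullstellensatz, the degree bound of \cite{DS}, and {\L}o\'s's theorem) enter only in routine ways.
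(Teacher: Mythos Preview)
Your proposal is correct and follows essentially the same route as the paper: take $c_1$ from \cite{DS}, bound $s$ by a dimension count, then derive $c_2$ by a nonstandard contradiction using that the finite--height elements form a subring and that $L[X]\subset K^*[X]$ is a faithful extension (you take a direct ultraproduct of the counterexample rings $R_k$, whereas the paper first uses compactness to land in a single $R$ and then saturates $^*R$, but the content is the same). The obstacle you isolate is precisely the one the paper addresses: one fixes the $\theta$--type of the height function throughout (equivalently, since $R$ is given in the statement, one may simply take every $R_k=R$), so that $R^*_{\mathrm{fin}}$ is indeed a subring --- this is why the restated Theorem~A in Section~3 records $c_2=c_2(n,D,H,\theta)$.
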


\begin{thB}
Let $h$ be the logarithmic height function.
There are bounds $B(n,D)$ and $C(n,D,H)$ such that if $I$
is a $(D,H)$-type ideal of $\overline{\mathbb Q}[X_1,...,X_n]$ then $I$ is prime
iff $1 \notin I$, and for all $f$, $g$ in $\overline{\mathbb Q}[X_1,...,X_n]$
of degree less than $B(n,D)$ and height less
than $C(n,D,H)$,
$fg \in I$ implies $f$ or $g$ is in $I.$
\end{thB}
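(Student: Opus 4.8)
The easy direction is immediate: a prime ideal is proper, so $1\notin I$, and a prime ideal certainly has the stated factorization property for the \emph{particular} pairs $f,g$ with $\deg f,\deg g<B(n,D)$ and height $<C(n,D,H)$. So the content lies in the converse, which I would prove by a nonstandard compactness argument in the spirit of the proof of Theorem A; this produces the bounds $B$ and $C$ but makes them ineffective. First recall from \cite{DS} that there is a bound $B=B(n,D)$, depending only on $n$ and $D$ and uniform over all fields, such that a $D$-type ideal $J$ of $L[X_1,\dots ,X_n]$ ($L$ any field) is prime if and only if $1\notin J$ and there is no pair $f,g$ with $\deg f,\deg g<B$ which witnesses non-primality, i.e.\ with $fg\in J$, $f\notin J$, $g\notin J$. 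Fix this $B$; it remains to produce $C=C(n,D,H)$. Note also that any $(D,H)$-type ideal is generated by a maximal linearly independent subset of its given degree-$\le D$, height-$\le H$ generators, hence by at most $N:=\binom{n+D}{n}$ of them, so one may assume the number of generators is a fixed $N=N(n,D)$.

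Suppose, for contradiction, that no $C$ works. Then for every $k$ there is a $(D,H)$-type ideal $I_k=\langle f_1^{(k)},\dots ,f_N^{(k)}\rangle$ which is not prime, which satisfies $1\notin I_k$, and for which every non-primality witness $(f,g)$ with $\deg f,\deg g<B$ has $\max(h(f),h(g))\ge k$; by \cite{DS} such a witness exists, since $I_k$ is not prime and $1\notin I_k$. Fix a nonprincipal ultrafilter $\mathcal U$ on $\mathbb N$ and set ${}^*\overline{\mathbb Q}=\overline{\mathbb Q}^{\mathbb N}/\mathcal U$, a model of $\mathrm{ACF}_0$ carrying the nonstandard extension ${}^*h$ of $h$. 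Because the degrees are uniformly bounded, the classes $\tilde f_j=(f_j^{(k)})_k/\mathcal U$ are honest polynomials in ${}^*\overline{\mathbb Q}[X_1,\dots ,X_n]$ of degree $\le D$ and internal height $\le H$, and I put $\mathcal I=\langle \tilde f_1,\dots ,\tilde f_N\rangle$. Then $1\notin\mathcal I$: otherwise, by Hermann's degree bound \cite{Her,DS} (which, for generators of degree $\le D$, depends only on $n$ and $D$), $1$ is a combination of the $\tilde f_j$ with cofactors of bounded degree; this is a first-order property of the tuple $\tilde a$ of coefficients of the $\tilde f_j$, so by {\L}o\'s's theorem $1\in I_k$ for $\mathcal U$-almost all $k$, contradicting $1\notin I_k$. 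Likewise, for every $k$ the ideal $I_k$ possesses a non-primality witness of degree $<B$; expressing ideal membership and non-membership through bounded-degree cofactors (Hermann's bounds, again uniform in $n,D$) makes this a first-order property of the generator coefficients, so by {\L}o\'s's theorem $\mathcal I$ possesses a non-primality witness of degree $<B$.

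Here is the crucial step. All coefficients of the $\tilde f_j$ lie in the set $\mathcal O\subseteq{}^*\overline{\mathbb Q}$ of elements of finite internal height (bounded by some standard real). From the basic inequalities $h(x+y)\le h(x)+h(y)+\log 2$, $h(xy)\le h(x)+h(y)$, $h(1/x)=h(x)$ together with the standard bound on the heights of the roots of a monic polynomial in terms of the heights of its coefficients, one checks that $\mathcal O$ is an algebraically closed subfield of ${}^*\overline{\mathbb Q}$ containing $\overline{\mathbb Q}$; since $\mathrm{ACF}_0$ is model complete, $\mathcal O\preceq{}^*\overline{\mathbb Q}$. The existence of a degree-$<B$ non-primality witness for the ideal generated by $\tilde f_1,\dots ,\tilde f_N$ is, as above, first-order with parameters in $\mathcal O$; it holds in ${}^*\overline{\mathbb Q}$, hence in $\mathcal O$. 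So there are $\hat f,\hat g$ with coefficients in $\mathcal O$, degrees $<B$, with $\hat f\hat g\in\mathcal I$ and $\hat f,\hat g\notin\mathcal I$; their coefficients having finite height, $\max({}^*h(\hat f),{}^*h(\hat g))<E$ for some standard $E$. Writing $\hat f=(f^{(k)})_k/\mathcal U$, $\hat g=(g^{(k)})_k/\mathcal U$ and applying {\L}o\'s's theorem to the (bounded-degree) membership and non-membership conditions shows that for $\mathcal U$-almost all $k$ the pair $(f^{(k)},g^{(k)})$ is a non-primality witness for $I_k$ of degree $<B$ and height $<E$. Picking such a $k$ with $k>E$ (possible since $\mathcal U$ is nonprincipal) contradicts the choice of $I_k$. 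This gives the required bound, with $C(n,D,H):=E$.

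The main obstacle is precisely this descent to $\mathcal O$. A plain ultraproduct only yields a non-primality witness for $\mathcal I$ over ${}^*\overline{\mathbb Q}$, possibly with infinite-height coefficients, which would reflect down to witnesses for the $I_k$ of unbounded height and prove nothing. What saves the argument is that the defining data — the coefficients of the $\tilde f_j$ — already lie in the finite-height, algebraically closed, elementary subfield $\mathcal O$, so the first-order assertion ``a degree-$<B$ witness exists'' descends to $\mathcal O$ and furnishes a finite-height witness. Carrying this out rigorously requires verifying that $\mathcal O$ is algebraically closed and elementary in ${}^*\overline{\mathbb Q}$, and that every ideal-membership predicate in play is genuinely first-order with parameters in $\mathcal O$ — which is exactly where the uniform-in-$(n,D)$ degree bounds of Hermann and of \cite{DS} are indispensable.
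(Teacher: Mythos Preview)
Your argument is correct and follows essentially the same approach as the paper's proof: assume no bound $C$ exists, pass to a nonstandard extension carrying an ideal that is not prime yet has no finite-height witness of degree below $B$, observe that the finite-height subfield $\mathcal O$ (the paper's $L=K_{fin}$) is algebraically closed, and use this to transfer primality/non-primality between $\mathcal O[X]$ and ${}^*\overline{\mathbb Q}[X]$ to obtain a contradiction.

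The only differences are in packaging. The paper works in an abstract $\aleph_1$-saturated nonstandard extension and invokes compactness, then appeals to its Corollary~2.8 (prime ideals extend to prime ideals along an extension of an algebraically closed field) and Theorem~2.10 (faithfulness of ${}^*K[X]\subset{}^*(K[X])$ from \cite{DS}) to derive the contradiction. You build the ultraproduct explicitly, use \L o\'s's theorem in both directions, and invoke model completeness of $\mathrm{ACF}$ directly via $\mathcal O\preceq{}^*\overline{\mathbb Q}$; your explicit use of Hermann's degree bounds to render ideal membership first-order is exactly what underlies the paper's cited results, and working with bounded-degree polynomials lets you stay inside ${}^*\overline{\mathbb Q}[X]$ and avoid the separate appeal to faithfulness of ${}^*K[X]\subset{}^*(K[X])$. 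These are cosmetic differences; the decisive step in both proofs is Lemma~3.4, that the finite-height elements form an algebraically closed field.
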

\section{Preliminaries}

\subsection{Height Function}
Let $\theta:{\mathbb N} \rightarrow \mathbb N$ be a function.
We say that
$$h:R \rightarrow [0,\infty)$$
is a height function of $\theta$-type if
for any $x$ and $y$ in $R$ with $h(x) \leq n$ and $h(y) \leq n$,
then both $h(x+y)\leq \theta(n)$
and $h(xy) \leq \theta(n)$.
We say that $h$ is a height function on $R$ if $h$ is
a height function of $\theta$-type for some
$\theta:{\mathbb N} \rightarrow \mathbb N.$

We can extend the height function $h$ to the polynomial ring $R[X_1,...X_n]$ by
$$
h\bigg(\displaystyle\sum_{\alpha}a_{\alpha}X^{\alpha}\bigg)
=\displaystyle\max_{\alpha}h(a_{\alpha}).
$$
Note that this extension does not have to be a height function,
it is just an extension of functions.
Now we give some examples of height functions.

\textbf{Examples:}
For the following examples of height functions, one can take
$\theta(n)=(n+1)^2$.
\begin{itemize}
\item If $(R,|\cdot|)$ is an absolute valued ring 
then $h(x)=|x|$ is a height function.
Moreover $h(x)=|x|+1$ and $h(x)=\max(1,|x|)$ 
are also height functions on $R.$
\item The degree function on 
$R[X_1,...,X_n]$ is a height function.
\item Let $\lambda$ be a positive real number.
On ${\mathbb Z}[X]$, define
$$h(a_0+a_1X+...+a_kX^k)=\displaystyle\sum_{i=0}^{k} |a_i|{\lambda}^i.$$
Then this is a height function on ${\mathbb Z}[X].$
\item Let $h:R \rightarrow [0,\infty)$ be a function such that
the sets 
$$A_n=\{x \in R: h(x) \leq n\}$$
are all finite for all $n \geq 1.$
Then $h$ is a height function of $\theta$-type where 
$\theta(n)=\displaystyle\max_{x,y \in A_n}\{h(x+y)+h(xy)\}.$
\item The $p$-adic valuation on $\mathbb Z$ is not a height function.
Note that
$1$ and $p^n - 1$ are not divisible by $p$ but
but their sum is divisible by $p^n.$

\end{itemize}


\subsection{The Logarithmic Height Function}
For the details of this subsection we refer the reader to 
\cite{Bom, Eisenbud, HS, Lang}.
\\

For  $f(x)=a_d(X-{\alpha}_1)...(X-{\alpha}_d) \in {\mathbb C}[X]$
the Mahler measure of $f$ is defined as 
$$M(f)=\displaystyle|a_d|\prod_{|\alpha_j| \geq 1}|\alpha_j|.$$
For $\alpha$ in $\overline{\mathbb Q}$ with minimal polynomial $f(x) \in \mathbb Z[X]$,
we define its Mahler measure as 
$M(\alpha)=M(f).$
The absolute non-logarithmic height of $\alpha$ is defined as
$$H(\alpha)= M(\alpha)^{1/d}.$$
Then the logarithmic height of $\alpha$ is defined as
$$h(\alpha)=\log H(\alpha)=\frac{\log M(\alpha)}{d}.$$
It is not known whether there exists an absolute constant $c>1$ such that
if $M(\alpha)>1$ then $M(\alpha) \geq c$.
This question was posed by D. Lehmer \cite{Leh} around 1933.
The best known example of the smallest Mahler measure
greater than 1 so far was also given by Lehmer:
if $\alpha$ is a root of the polynomial
$$X^{10} + X^9 - X^7 - X^6 - X^5 - X^4 - X^3 + X + 1$$
then $M(\alpha) \approx 1.17628.$
For detailed results on Mahler measure and Lehmer's problem, see \cite{S}.

The logarithmic height function is a function
that measures the complexity of an algebraic number.
The logarithmic height function behaves well
under arithmetic operations but using this definition it is not
immediate to see. So we will give an equivalent definition using absolute values.

Let $K$ be a number field containing $\alpha.$
We define the relative height 
$$H_K(\alpha)=\displaystyle\prod_{v \in M_K} max\{1,||\alpha||_v\}$$
where $M_K$ is a set of absolute values extending the absolute values on $\mathbb Q$,
satisfying the product formula with multiplicities $N_v=[K_v:{\mathbb Q}_v]$
and $||\alpha||_v={|\alpha|_v}^{N_v}.$

Then absolute non-logarithmic height becomes $${H_K(\alpha)}^{1/[K:{\mathbb Q}]}$$
and this does not depend the choice of $K.$
Now one can see the height function behaves well under arithmetic operations:

\begin{itemize}
\item $H(\alpha+\beta)\leq 2H(\alpha)H(\beta)$
\item $H(\alpha\beta) \leq H(\alpha)H(\beta)$
\item $H(1/\alpha)=H(\alpha)$

\end{itemize}

\begin{lemma} \label{mahler}
Suppose $f=a_0+...+a_dX^d \in {\mathbb C}[X].$
Put $|f|=\max_{i}\{|a_i|\}.$
Then $2^{-d}|f| \leq M(f) \leq 2^{2d+1}|f|.$
\end{lemma}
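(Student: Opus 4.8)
The plan is to prove the two inequalities by entirely different means. For the lower bound $2^{-d}|f|\le M(f)$ I would estimate the coefficients of $f$ directly in terms of its roots, whereas for the upper bound $M(f)\le 2^{2d+1}|f|$ I would appeal to an analytic fact about $\log|f|$ on the unit circle; the displayed power of $2$ is a crude envelope for the true (much smaller) constant.

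For the lower bound, write $f=a_d(X-\alpha_1)\cdots(X-\alpha_d)$, so that the coefficient of $X^{d-k}$ equals $(-1)^k a_d\,e_k(\alpha_1,\dots,\alpha_d)$, where $e_k$ denotes the $k$-th elementary symmetric polynomial. Each of the $\binom{d}{k}$ monomials of $e_k$ is a product of $k$ of the roots and hence has modulus at most $\prod_{|\alpha_j|\ge 1}|\alpha_j|=M(f)/|a_d|$, so the triangle inequality gives $|a_{d-k}|\le\binom{d}{k}M(f)\le 2^d M(f)$. Taking the maximum over $k$ yields $|f|\le 2^d M(f)$, that is $2^{-d}|f|\le M(f)$ (indeed with the sharper constant $\binom{d}{\lfloor d/2\rfloor}^{-1}$ in place of $2^{-d}$).

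For the upper bound I would use the integral representation of the Mahler measure, $\log M(f)=\int_0^1\log|f(e^{2\pi i t})|\,dt$, which follows from Jensen's formula (see \cite{Bom,HS}). Since the mean of $\log|f|$ over the circle is at most its maximum, $M(f)\le\max_{|z|=1}|f(z)|\le\sum_{k=0}^d|a_k|\le(d+1)|f|$, and $(d+1)\le 2^{2d+1}$ for every $d\ge 0$. If one prefers to avoid the integral formula, the same bound, even with $\sqrt{d+1}$ instead of $d+1$, follows from Landau's inequality, which can be proved by hand: replacing each factor $(X-\alpha_j)$ with $|\alpha_j|\ge 1$ by $(\overline{\alpha_j}X-1)$ produces a polynomial $g=\sum_k b_kX^k$ of degree $d$ with $|g(z)|=|f(z)|$ on $|z|=1$ and with $|b_d|=M(f)$; Parseval's identity together with this circle identity gives $\sum_k|b_k|^2=\sum_k|a_k|^2$, whence $M(f)=|b_d|\le\big(\sum_k|a_k|^2\big)^{1/2}\le\sqrt{d+1}\,|f|$.

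The only genuine difficulty is the upper bound, and it is worth emphasizing why it cannot be obtained merely by controlling the size of the roots: the Cauchy bound $|\alpha_j|\le 1+\max_i|a_i/a_d|$ only leads to an estimate for $M(f)$ that is polynomial of degree $d$ in $|f|$ (and carries a negative power of $|a_d|$), which is far weaker than the asserted linear dependence. The mean value property of $\log|f|$ on the unit circle, or equivalently the Blaschke-type substitution $\alpha_j\mapsto 1/\overline{\alpha_j}$ combined with Parseval, is what makes the linear bound possible; the lower bound, together with the absorption of the true constant into $2^{2d+1}$, is routine.
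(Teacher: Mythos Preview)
Your argument is correct on both halves. The lower bound via elementary symmetric polynomials is the standard one, and your observation that each monomial $\prod_{j\in S}|\alpha_j|$ is bounded by $\prod_{|\alpha_j|\ge1}|\alpha_j|=M(f)/|a_d|$ (since factors with $|\alpha_j|<1$ only shrink the product) is exactly what is needed. For the upper bound, both the Jensen-formula route $M(f)\le\max_{|z|=1}|f(z)|\le(d+1)|f|$ and the Landau--Parseval route $M(f)\le\sqrt{d+1}\,|f|$ are valid and well known; your remark that the naive Cauchy root bound cannot yield the linear dependence on $|f|$ is also well taken.

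As for comparison with the paper: the paper states this lemma without proof, treating it as a standard fact drawn from the references on heights and Mahler measure (Bombieri--Gubler, Hindry--Silverman, Lang). So there is nothing in the paper to compare your argument against; you have supplied a complete proof where the paper gives none. If anything, your discussion is more thorough than necessary---either one of your two upper-bound arguments would suffice, and the closing paragraph, while insightful, is commentary rather than proof.
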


Now we give the Gauss lemma.
First put $|f|_v=\max_{i}\{|a_i|_v\}.$
\begin{lemma} \label{gauss}
Let $K$ be a number field and 
suppose $f$, $g$ are in $K[X].$
For a non-archimedean absolute value $v$ on $K$, we have
$|fg|_v=|f|_v|g|_v$
\end{lemma}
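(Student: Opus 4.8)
The plan is to prove the two inequalities $|fg|_v \le |f|_v|g|_v$ and $|fg|_v \ge |f|_v|g|_v$ separately. We may assume $f$ and $g$ are both nonzero, the statement being trivial otherwise. Writing $f=\sum_i a_iX^i$ and $g=\sum_j b_jX^j$, the coefficients of $fg$ are the sums $c_k=\sum_{i+j=k}a_ib_j$. Since $v$ is non-archimedean, $|c_k|_v\le\max_{i+j=k}|a_i|_v|b_j|_v\le |f|_v|g|_v$ for each $k$, and taking the maximum over $k$ gives $|fg|_v\le |f|_v|g|_v$. This half uses nothing beyond the ultrametric inequality.

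For the reverse inequality the key step is a normalization. Choose a coefficient $a$ of $f$ with $|a|_v=|f|_v$ and a coefficient $b$ of $g$ with $|b|_v=|g|_v$; both are nonzero since $f,g\neq 0$. Replacing $f$ by $f/a$ and $g$ by $g/b$ multiplies $|f|_v$, $|g|_v$ and $|fg|_v$ by $|a|_v^{-1}$, $|b|_v^{-1}$ and $|a|_v^{-1}|b|_v^{-1}$ respectively, so it is enough to treat the case $|f|_v=|g|_v=1$ and then show $|fg|_v=1$. In that case all coefficients of $f$ and $g$ lie in the valuation ring $\mathcal O_v=\{x\in K:|x|_v\le 1\}$, which is a local ring whose maximal ideal is $\mathcal M_v=\{x\in K:|x|_v<1\}$ (an element of $\mathcal O_v$ is a unit precisely when its $v$-absolute value equals $1$), with residue field $k_v=\mathcal O_v/\mathcal M_v$.

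The reduction map $\mathcal O_v[X]\to k_v[X]$ is a ring homomorphism. Since $|f|_v=1$, some coefficient of $f$ is a unit of $\mathcal O_v$, so the image $\bar f\in k_v[X]$ is nonzero; likewise $\bar g\neq 0$. As $k_v$ is a field, $k_v[X]$ is an integral domain, hence $\overline{fg}=\bar f\,\bar g\neq 0$, which means some coefficient of $fg$ does not lie in $\mathcal M_v$, i.e.\ has $v$-absolute value equal to $1$ (it is $\le 1$ by the first inequality). Therefore $|fg|_v=1$, completing the proof. I do not expect a genuine obstacle here; the only points needing a word of care are the scaling step above and the standard fact that for a non-archimedean absolute value the non-units of $\mathcal O_v$ are exactly the elements of absolute value strictly below $1$, so that $\mathcal O_v$ is local and $k_v$ really is a field — none of which uses that $K$ is a number field.
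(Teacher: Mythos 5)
Your proof is correct: the ultrametric inequality gives $|fg|_v\le|f|_v|g|_v$, and the normalization followed by reduction modulo $\mathcal M_v$ to the integral domain $k_v[X]$ gives the reverse inequality. The paper does not supply a proof of this lemma --- it is quoted as a standard fact (Gauss's lemma for non-archimedean absolute values) from the references listed at the start of the subsection --- and the argument you give is exactly the classical one found in those sources, so there is nothing to reconcile.
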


\subsection{Height inequality}
There is a relation between height of a polynomial and height of its roots.
Define $H(f)=\max_{i} H(a_i)$ as before.
Then if $f$ is a polynomial over a number field $K$, we have 
$$H_K(f)=\displaystyle\prod_{v \in M_K} max\{1,||f||_v\}$$
and $H(f)={H_K(f)}^{1/[K: \mathbb Q]}.$

\begin{lemma} \label{heightt}
For  $$f(x)=(X-{\alpha}_1)...(X-{\alpha}_d)=a_0+...+X^d \in \overline{\mathbb Q}[X],$$
$H({\alpha}_i)$ is uniformly bounded by $H(f)$ and $d$ i.e
$$2^{-d}H(f) \leq \prod_{i} H({\alpha}_i) \leq 2^{2d+1}H(f).$$
\end{lemma}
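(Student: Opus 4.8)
The plan is to choose a number field $K$ containing the roots $\alpha_1,\dots,\alpha_d$, so that $f\in K[X]$ is monic, and to compare the relative heights $H_K(f)$ and $\prod_i H_K(\alpha_i)$ place by place using the product description of the height recalled above, taking $[K:\mathbb Q]$-th roots at the end and invoking the standard independence from the choice of $K$. The starting point is the identity
$$\prod_{i} H_K(\alpha_i)=\prod_{v\in M_K}\ \prod_{i}\max\{1,\|\alpha_i\|_v\},$$
which, together with the factorization $f=\prod_{i=1}^{d}(X-\alpha_i)$, reduces everything to comparing, for each place $v$, the local factor $\prod_i\max\{1,\|\alpha_i\|_v\}$ with $\max\{1,\|f\|_v\}$.

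For a non-archimedean $v$ I would apply the Gauss lemma (Lemma \ref{gauss}) repeatedly to obtain $|f|_v=\prod_i|X-\alpha_i|_v=\prod_i\max\{1,|\alpha_i|_v\}$, the last step being the Gauss norm of a linear polynomial. Raising to the $N_v$-th power and using that every factor is at least $1$ gives $\max\{1,\|f\|_v\}=\|f\|_v=\prod_i\max\{1,\|\alpha_i\|_v\}$, so the non-archimedean factors on the two sides coincide exactly and contribute no constant.

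For an archimedean $v$ I identify $K_v$ with a subfield of $\mathbb C$ and regard $f$ as a monic polynomial over $\mathbb C$, so that Lemma \ref{mahler} applies: since $f$ is monic its Mahler measure equals $\prod_i\max\{1,|\alpha_i|_v\}$, while $|f|_v\ge|1|_v=1$ forces $\max\{1,|f|_v\}=|f|_v$. Lemma \ref{mahler} then yields
$$2^{-d}\max\{1,|f|_v\}\ \le\ \prod_i\max\{1,|\alpha_i|_v\}\ \le\ 2^{2d+1}\max\{1,|f|_v\}.$$
Raising this to the $N_v$-th power and multiplying over all archimedean places, where $\sum_{v\mid\infty}N_v=[K:\mathbb Q]$, turns the constants $2^{-d}$ and $2^{2d+1}$ into $2^{-d[K:\mathbb Q]}$ and $2^{(2d+1)[K:\mathbb Q]}$. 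Combining this with the exact non-archimedean part gives $2^{-d[K:\mathbb Q]}H_K(f)\le\prod_iH_K(\alpha_i)\le 2^{(2d+1)[K:\mathbb Q]}H_K(f)$, and extracting $[K:\mathbb Q]$-th roots produces the asserted inequality $2^{-d}H(f)\le\prod_iH(\alpha_i)\le 2^{2d+1}H(f)$.

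The routine part is the bookkeeping with the normalizations $N_v$; the one thing to be careful about is that the powers of $2$ scale with $N_v$ in exactly the way that makes the final constants independent of $K$ — which works precisely because $\sum_{v\mid\infty}N_v=[K:\mathbb Q]$ — and that Lemma \ref{mahler}, stated over $\mathbb C$, is legitimately applied at each archimedean completion. I do not anticipate a real obstacle here: the statement is essentially a repackaging of Lemmas \ref{mahler} and \ref{gauss} through the product formula.
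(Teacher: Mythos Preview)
Your proof is correct and follows essentially the same route as the paper: pick a number field $K$ containing the $\alpha_i$, use the Gauss lemma at non-archimedean places to get exact equality, use Lemma~\ref{mahler} at the archimedean places to pick up the factors $2^{-d}$ and $2^{2d+1}$, and then pass from $H_K$ to the absolute height. Your version is in fact more careful than the paper's about the local degrees $N_v$ and the identity $\sum_{v\mid\infty}N_v=[K:\mathbb Q]$, which the paper leaves implicit.
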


\begin{proof}
Let $K$ be a number field containing ${\alpha}_i,a_j.$
By \eqref{mahler} we see that $ 2^{-d}|f| \leq M(f) \leq 2^{2d+1}|f|.$
For non-archimedean $v \in M_K$, by \eqref{gauss} we see that
$|f|_v=\prod_{i \leq d}\max\{1, |{\alpha}_i|_v \}$.
Therefore since $M(f) \geq 1$, we obtain that
$$ 2^{-d}\prod_{v}\max\{1,|f|_v\} \leq \prod_{i,v}\max\{1,|{\alpha}_i|_v\} \leq 2^{2d+1}\prod_{v}\max\{1,|f|_v\}.$$
Hence we get 
$$ 2^{-d}H(f) \leq \prod_{i} H({\alpha}_i) \leq 2^{2d+1}H(f).$$

\end{proof}

\subsection{Nonstandard Extensions and Height Function}
Now we define a nonstandard extension following
\cite{Hen}.

\begin{definition}[Nonstandard Extension of a Set] Let $\mathbb M$ be a nonempty
set. A nonstandard extension of $\mathbb M$ consists of a mapping that
assigns a set $^*A$ to each $A$ in ${\mathbb M}^m$ for all $m \geq 0$,
such that $^*{\mathbb M}$ is non-empty
and the following conditions are satisfied for all $m,n \geq 0$:
\\

(E1) The mapping preserves Boolean operations on subsets of ${\mathbb M}^m$:
if $A \subseteq {\mathbb M}^m$, then $^*A \subseteq (^*{\mathbb M})^m$;
if $A;B \subseteq {\mathbb M}^m$, then
$^*(A \cup B) = {^*A} \cup {^*B}$, $^*(A \cap B) = {^*A} \cap {^*B}$ and
$^*(A \setminus B) = {^*A} \setminus {^*B}.$

(E2) The mapping preserves basic diagonals:
if $1 \leq i < j \leq m$ and $\Delta = \{(x_1,..., x_m) \in {\mathbb M}^m:
x_i = x_j \}$ then
$^*{\Delta} = \{(x_1,..., x_m) \in (^*{\mathbb M})^m:
x_i = x_j \}.$

(E3) The mapping preserves Cartesian products:
if $A \subseteq {\mathbb M}^m$ and $B \subseteq {\mathbb M}^n$,
then $^*(A \times B) = {^*A} \times {^*B}.$ (We regard $A \times B$ as a
subset of ${\mathbb M}^{m+n}$.)

(E4) The mapping preserves projections that omit the final coordinate:
let $\pi$ denote projection of $n+1$-tuples on the first $n$ coordinates;
if $A \subseteq {\mathbb M}^{n+1}$
then $^*(\pi(A)) = \pi(^*A).$
\end{definition}

The set $^*{\mathbb M}$ will denote the nonstandard extension of $\mathbb M$.
For example, an ultrapower of ${\mathbb M}$ which
respect to a nonprincipal ultrafilter on $\mathbb N$ is
a proper nonstandard extension of $\mathbb M.$
Subsets of $^*{\mathbb M}$ of the form $^*A$ for some
subset $A$ of ${\mathbb M}$ are called internal. Not every subset
of $^*{\mathbb M}$ need to be internal.
We list the basic properties of nonstandard extensions with no proof.
\begin{itemize}

\item For each $n \geq 0$, $^*({\mathbb M}^n)=(^*{\mathbb M})^n$ and $^*{\emptyset}=\emptyset.$
\item For any $A,B \subseteq {\mathbb M}^n$, $^*A={^*B}$ iff $A=B.$
\item For each $x \in {\mathbb M}$, the set $^*{\{x\}}$ has exactly one element.
\\
Using the properties above, we can embed ${\mathbb M}$ into $^*{\mathbb M}.$
So without loss of generality we may assume that ${\mathbb M}$
is a subset of $^*{\mathbb M}.$ Moreover, if $A \subseteq {\mathbb M}^n$ then
${^*A} \cap {\mathbb M}^n=A^n$, in particular, $A \subseteq {^*A}.$
Also every function on $A$ extends to a function on $^*A.$
The new function is denoted by $^*f$, but without confusion we write $f$
instead. Lastly we give the most important property of nonstandard extensions.
\item \textbf{Transfer formula:} The two sets ${\mathbb M}$ and $^*{\mathbb M}$ satisfy
the same first order sentences. Moreover if $\phi(v_1,...,v_n)$ is a formula
over ${\mathbb M}$ and 
$B=\{(x_1,...,x_m)\in {\mathbb M}^n: \phi(x_1,...,x_n) \text{ is true in } {\mathbb M}^n\}$
then
$^*B=\{(x_1,...,x_m) \in {^*{\mathbb M}}^n:
{^*{\phi}(x_1,...,x_n)} \text{ is true in } {^*{\mathbb M}}^n\}$,
where  $^*{\phi}(v_1,...,v_n)$ is the corresponding formula of  $\phi(v_1,...,v_n).$
\end{itemize}

The notion of a nonstandard extension and its properties can be generalized to
many sorted structures. This will be significant for the definition of the height function
which takes values in $\mathbb R.$
By a structure we mean a set equipped with some functions and relations on it.
For example, a ring is a structure with addition and multiplication.
A subset of a structure ${\mathbb M}$ which
is given by a first order formula is called a definable subset of ${\mathbb M}$.
We say that a structure ${\mathbb M}$ is $\aleph_1$-saturated if
whenever a collection of definable subsets
$(A_i)_{i \in I}$
whose parameters come from a countable set satisfies the finite intersection property
(that means for any finite subset $I_0$ of $I$ we have $\bigcap_{i \in I_0}A_i$ is not empty )
then $\bigcap_{i \in I}A_i$ is not empty.
\\
We assume all nonstandard extensions are $\aleph_1$-saturated.
Let
$$^*(K[X_1,...X_n])$$
be a proper nonstandard extension of $K[X_1,...X_n].$
For instance  an ultrapower of $K[X_1,...X_n]$
which respect to a nonprincipal ultrafilter on $\mathbb N$ is $\aleph_1$-saturated.
Ultraproducts of structures automatically become $\aleph_1$-saturated.
Note that $^*(R[X_1,...X_n])$, $^*R$ and $^*K$ are internal sets.
The height function $h$ on $R[X_1,...X_n]$  extends to
$^*(R[X_1,...X_n])$ which takes values in $^*{\mathbb R}$ though
this extension is no longer a height function if $h$ is unbounded.
Moreover it satisfies the same first order properties as $h$. In particular
if $x$, $y$ in $^*R$ with $h(x) \leq n$ and $h(y) \leq n$, where $n \in {^*{\mathbb N}}$,
then we have both $h(x+y) \leq \theta(n)$ and  $h(xy) \leq \theta(n).$
Note that  $^*K[X_1,...X_n] \subsetneq {^*(K[X_1,...X_n])}.$
Define
$$R_{fin}=\{x \in {^*{R}}: h(x) \in {\mathbb R}_{fin}\}$$
where ${\mathbb R}_{fin}=\{x \in {^*{\mathbb R}}: |x|<n \text{ for some } n \in {\mathbb N}\}$ and
${^*{\mathbb R}}$ is a nonstandard extension of ${\mathbb R}.$ The elements in
${^*{\mathbb R}} \setminus {\mathbb R}$ are called infinite.
\\

By the properties of the height function, if there is a height function on $R$,
we see that $R_{fin}$ is a subring of $^*R$ and it contains $R.$
The next lemma shows when $R_{fin}$ is internal.
\begin{lemma}
The set $R_{fin}$ is an internal subset of $^*R$ if and only if the height function on $R$ is bounded.
\end{lemma}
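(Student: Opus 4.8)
The plan is to treat the two implications separately; the implication ``$h$ bounded $\Rightarrow R_{fin}$ internal'' is the trivial one and ``$R_{fin}$ internal $\Rightarrow h$ bounded'' is the one that needs work, which I would handle by overspill. For the easy direction, suppose $h(x)\le M$ for all $x\in R$ with $M\in{\mathbb N}$. Then the sentence $\forall x\,(h(x)\le M)$ holds over $R$, so by the transfer principle it holds over $^*R$; hence $h(x)\le M\in{\mathbb R}_{fin}$ for every $x\in{^*R}$, which means $R_{fin}={^*R}$, and $^*R$ is internal as recorded in the preliminaries.

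For the converse I would argue contrapositively: assume $h$ is unbounded on $R$ and suppose toward a contradiction that $R_{fin}$ is internal. Consider
$$F=\{n\in{^*{\mathbb N}}:\ \exists x\in R_{fin}\ \text{with}\ h(x)>n\}.$$
Since $R_{fin}$ is internal by assumption and $^*h$ is an internal function, $F$ is an internal subset of $^*{\mathbb N}$. Because $R\subseteq R_{fin}$ (standard elements of $R$ have standard, hence finite, height) and $h$ is unbounded on $R$, for each standard $n$ there is $x\in R\subseteq R_{fin}$ with $h(x)>n$, so $n\in F$; thus ${\mathbb N}\subseteq F$. By overspill, $F$ then contains an infinite $\nu\in{^*{\mathbb N}}$, so there is $x\in R_{fin}$ with $h(x)>\nu$. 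But then $h(x)$ is infinite, contradicting $h(x)\in{\mathbb R}_{fin}$. Hence $R_{fin}$ is not internal.

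The step carrying the weight is the overspill argument, i.e. the fact that no internal subset of $^*{\mathbb N}$ containing ${\mathbb N}$ can avoid the infinite elements; equivalently, ${\mathbb N}$ itself is not internal in a proper extension. I would justify this in the standard way: were ${\mathbb N}$ internal, then $^*{\mathbb N}\setminus{\mathbb N}$ would be a nonempty internal subset of $^*{\mathbb N}$, which by transfer of the well-ordering of ${\mathbb N}$ has a least element $m$; since $m-1<m$ we would get $m-1\in{\mathbb N}$ and hence $m\in{\mathbb N}$, a contradiction. The remaining points are routine: that $F$ is internal, which follows from the internal definition principle since $F$ is defined from the internal objects $R_{fin}$, $^*{\mathbb N}$, $^*h$ by a first-order formula with internal parameters; and that the transfer step in the first direction is legitimate, since the boundedness hypothesis is a single first-order sentence with the standard parameter $M$.
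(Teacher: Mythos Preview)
Your proof is correct. Both directions match the paper in the easy case ($h$ bounded $\Rightarrow R_{fin}={^*R}$), and for the substantive direction you and the paper reach the same contradiction---an element of $R_{fin}$ with infinite height---but via different nonstandard mechanisms. The paper writes $R_{fin}={^*A}$ for some $A\subseteq R$ (which is exactly what ``internal'' means in this paper), argues that $h$ must be bounded on $A$ because otherwise $\aleph_1$-saturation applied to a sequence $(a_n)\subseteq A$ with $h(a_n)\to\infty$ would produce an element of ${^*A}=R_{fin}$ of infinite height, and then concludes that $h$ is bounded on ${^*A}\supseteq R$. You instead run overspill on the auxiliary set $F$. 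One remark: the paper's definition of ``internal'' is the narrow one (only sets of the form ${^*A}$), so your appeal to the internal definition principle should really be phrased as: if $R_{fin}={^*A}$ then $F={^*B}$ where $B=\{n\in\mathbb N:\exists x\in A,\ h(x)>n\}$, by transfer. With that adjustment your argument goes through, and in fact overspill for sets of the form ${^*B}$ is immediate: $\mathbb N\subseteq{^*B}$ forces $B=\mathbb N$, hence ${^*B}={^*\mathbb N}$. Your route avoids invoking saturation and works from properness of the extension alone; the paper's route is a touch more direct but leans on the standing $\aleph_1$-saturation hypothesis.
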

\begin{proof}
Suppose $R_{fin}={^*A}$ for some subset $A$ of $R$.
First we show that the height function on
$A$ must be bounded. To see this, if there is a sequence $(a_n)_n$ in $A$ such that
$\displaystyle\lim_{n \rightarrow \infty}h(a_n)=\infty,$ then by saturation there is an element in
$^*A$ whose height is infinite. This contradicts the fact that all the elements in
$R_{fin}$ have bounded height. So the height function on $A$ is bounded.
Therefore the height function on $^*A$ is also bounded. However since $R_{fin}$ contains $R$,
the height function on $R$ must be bounded.
Conversely if the height function on $R$ is bounded, then we have
$R_{fin}={^*R}$ and so $R_{fin}$ is internal.
\end{proof}
Now we fix some more notations.
Put $L=Frac(R_{fin})$ which is a subfield of $^*{K}$.
Note that $^*{K}$ is the fraction field of $^*{R}.$
Also we fix some algebraic closure $K^{alg}$ of $K.$
\\

For more detailed information about Nonstandard Analysis and Model Theory, the reader might 
consult \cite{Gold}, \cite{Hen} and \cite{Mar}.
In fact of being a height function is very related to the set $R_{fin}.$
The following proposition is the nonstandard point of view
definition of a height function. However it is ineffective, i.e. it does not provide
the $\theta$-type of the height function.

\begin{prop} \label{height}
A function $h:R \rightarrow [0,\infty)$ is a height function on $R$
if and only if $R_{fin}$ is a subring of $^*R.$
\end{prop}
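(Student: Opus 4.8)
The plan is to prove the two implications separately; the forward implication is the transfer argument already hinted at in the text, while the reverse implication — the substantive half — will be obtained from $\aleph_1$-saturation, in the spirit of the preceding lemma. For the forward direction, suppose $h$ is of $\theta$-type. Given $x,y\in R_{fin}$, choose $n\in{\mathbb N}$ with $h(x)\le n$ and $h(y)\le n$; transferring the defining inequalities of a $\theta$-type function (valid in $^*R$ already for standard $n$) yields $h(x+y)\le\theta(n)$ and $h(xy)\le\theta(n)$, both finite, so $x+y,xy\in R_{fin}$. For additive inverses, note that $-1\in R$ has a standard, hence finite, height; with $N=\max(\lceil h(-1)\rceil,n)\in{\mathbb N}$ one gets $h(-x)=h((-1)\cdot x)\le\theta(N)<\infty$. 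Since $0,1\in R\subseteq R_{fin}$, this shows $R_{fin}$ is a subring.

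For the reverse direction I would first record an elementary reformulation: $h$ is a height function if and only if for every $n\in{\mathbb N}$ the two suprema
$$\sigma(n)=\sup\{\,h(x+y):x,y\in R,\ h(x)\le n,\ h(y)\le n\,\},\qquad \pi(n)=\sup\{\,h(xy):x,y\in R,\ h(x)\le n,\ h(y)\le n\,\}$$
are finite. Indeed, if they are finite then $\theta(n):=\lceil\max(\sigma(n),\pi(n))\rceil$ exhibits $h$ as a $\theta$-type height function, while conversely any $\theta$-type height function forces $\sigma(n),\pi(n)\le\theta(n)$. Thus, assuming $R_{fin}$ is a subring, it suffices to derive a contradiction from the hypothesis that $\sigma(n_0)=\infty$ for some $n_0\in{\mathbb N}$ (the case $\pi(n_0)=\infty$ being identical, with $x+y$ replaced by $xy$ throughout, and using closure under multiplication instead of addition).

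So assume $\sigma(n_0)=\infty$. For $m\in{\mathbb N}$, let $A_m\subseteq(^*R)^2$ be the set defined by the formula $\varphi_m(v_1,v_2)\colon\ h(v_1)\le n_0\wedge h(v_2)\le n_0\wedge h(v_1+v_2)\ge m$, whose parameters $n_0,m$ lie in the countable set ${\mathbb N}$. Since $\sigma(n_0)=\infty$, for each $m$ there are $x,y\in R$ with $h(x),h(y)\le n_0$ and $h(x+y)\ge m$, so $\exists v_1\,\exists v_2\,\varphi_m$ holds in the standard structure and hence, by transfer, $A_m\ne\emptyset$. The family $(A_m)_{m\in{\mathbb N}}$ is decreasing, so it has the finite intersection property, and $\aleph_1$-saturation produces $(x,y)\in\bigcap_{m}A_m$. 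Then $h(x)\le n_0$ and $h(y)\le n_0$ are finite, so $x,y\in R_{fin}$, whereas $h(x+y)\ge m$ for every $m\in{\mathbb N}$ forces $h(x+y)$ to be infinite, so $x+y\notin R_{fin}$ — contradicting closure of $R_{fin}$ under addition. I expect the only delicate points to be the bookkeeping of the reformulation via $\sigma$ and $\pi$ and the verification that the $A_m$ are genuinely definable over a countable parameter set so that the stated form of $\aleph_1$-saturation applies; the saturation step itself just mirrors the argument used to show $R_{fin}$ is internal precisely when $h$ is bounded.
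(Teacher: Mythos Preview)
Your proposal is correct and follows essentially the same approach as the paper: the forward direction is the transfer argument (the paper simply refers back to the remark preceding the proposition), and the reverse direction negates the height-function property to obtain a fixed bound $n_0$ together with witnesses of unbounded $h(x\star y)$, then invokes saturation to produce $x,y\in R_{fin}$ with $x\star y\notin R_{fin}$. Your write-up is more explicit than the paper's (you spell out the $\sigma,\pi$ reformulation, the definable sets $A_m$, and the closure under additive inverses), but the underlying argument is the same.
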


\begin{proof}
We have seen that if $h$ is a height function then $R_{fin}$ is a subring.
Conversely suppose $R_{fin}$ is a subring and $h$ is not a height function.
This means there is some $N \in \mathbb N$ such that we have two sequences
$(r_n)$ and $(s_n)$ in $R$ with $h(r_n) \leq N$ and $h(s_n) \leq N$, but
$\displaystyle\lim_{n \rightarrow \infty}h(r_n \star s_n)=\infty$,
where the binary operation $\star$ means either addition or
multiplication.
By saturation, we get two elements $r$ and $s$ in $^*R$ such that
$h(r) \leq N$, $h(s) \leq N$ but $h(r \star s)$ is infinite. This contradicts the fact that
$R_{fin}$ is a subring.
\end{proof}
\subsection{Faithfulness and degree bounds}
In this subsection, we list some results from commutative algebra and
in particular about faithful extension of modules.
We refer the reader to  \cite{Bour}, \cite{Mat} or \cite{Mat2}.
Moreover we give the results in \cite{DS}
that lead to the existence
of the constant $c_1.$ 
\begin{lemma}\label{czero}
Let $F$ be a field and $f_1,...,f_s \in F[X_1,...X_n].$
Then $1 \in \langle f_1,...,f_s\rangle$ if and only if $f_1,...,f_s$ have no
common zeros in $F^{alg}.$

\end{lemma}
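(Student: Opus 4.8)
The plan is to recognize Lemma~\ref{czero} as the weak Hilbert Nullstellensatz over the field $F$, and to prove it by standard commutative algebra (Zariski's lemma) rather than by the nonstandard machinery developed later; here it serves only as an input to the degree bounds of \cite{DS} that produce the constant $c_1$.

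One implication is immediate. If $1 = f_1 g_1 + \dots + f_s g_s$ with $g_i \in F[X_1,\dots,X_n]$ and if $a = (a_1,\dots,a_n) \in (F^{alg})^n$ were a common zero of $f_1,\dots,f_s$, then evaluating this identity at $a$ would give $1 = 0$ in $F^{alg}$, which is absurd. Hence $1 \in \langle f_1,\dots,f_s\rangle$ forces the absence of a common zero.

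For the converse I would argue contrapositively. Assume $1 \notin I := \langle f_1,\dots,f_s\rangle$, so $I$ is a proper ideal of $F[X_1,\dots,X_n]$; choose a maximal ideal $\mathfrak{m} \supseteq I$. Then $E := F[X_1,\dots,X_n]/\mathfrak{m}$ is a field which, via the images of $X_1,\dots,X_n$, is finitely generated as an $F$-algebra. The key step is Zariski's lemma (see \cite{Eisenbud}): a field finitely generated as an algebra over a field $F$ is a finite, hence algebraic, extension of $F$. Therefore $E$ admits an $F$-embedding $\iota \colon E \hookrightarrow F^{alg}$; composing with the quotient map $F[X_1,\dots,X_n] \to E$ yields a ring homomorphism $\varphi \colon F[X_1,\dots,X_n] \to F^{alg}$ fixing $F$. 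Setting $a_i := \varphi(X_i)$ and $a := (a_1,\dots,a_n)$, for every $f \in \mathfrak{m}$ we get $f(a) = \varphi(f) = 0$; in particular each $f_i \in I \subseteq \mathfrak{m}$ vanishes at $a$, so $f_1,\dots,f_s$ have a common zero in $F^{alg}$. This is exactly the contrapositive of the desired implication.

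The only substantive ingredient is Zariski's lemma (equivalently, the full Nullstellensatz); the remaining moves — passing to a maximal ideal containing $I$, and reading off a common zero from an $F$-embedding of $E$ into $F^{alg}$ — are formal. I therefore expect no real obstacle: the lemma is a classical black box, and the purpose of stating it is to combine it with the faithfulness and degree-bound results recalled in this subsection.
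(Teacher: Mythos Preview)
Your proof is correct, but the paper argues the nontrivial implication differently. Rather than going through a maximal ideal and Zariski's lemma, the paper takes Hilbert's Nullstellensatz over the algebraically closed field $F^{alg}$ as a black box to get $1=\sum f_i g_i$ with $g_i\in F^{alg}[X_1,\dots,X_n]$, and then observes that, once one fixes a degree bound, this identity is a linear system over $F$ in the coefficients of the $g_i$; since a linear system with coefficients in $F$ that is solvable in an extension field is already solvable in $F$ (Gauss--Jordan), one obtains $h_i\in F[X_1,\dots,X_n]$ with $1=\sum f_i h_i$. Your route is more self-contained (you essentially reprove the weak Nullstellensatz rather than assume it over $F^{alg}$), while the paper's descent-by-linear-algebra step is precisely the mechanism that reappears in the faithfulness arguments of Lemmas~\ref{faith} and~\ref{faithful} used later to transfer solutions between polynomial rings over different base fields.
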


\begin{proof}
$\Rightarrow:$ Clear.
\\
$\Longleftarrow:$
By Hilbert's Nullstellensatz, there are $g_1,...,g_s \in {F^{alg}}[X_1,...,X_n]$
such that 
$1=f_1g_1+...+f_sg_s.$
This is a linear system when we consider the coefficients of all the polynomials.
Therefore $1=f_1Y_1+...+f_sY_s$ has a solution in $F^{alg}.$
Now by the Gauss-Jordan Theorem, this linear system has a solution in $F.$
So there are $h_1,...,h_s \in F[X_1,...,X_n]$
such that
$$1=f_1h_1+...+f_sh_s.$$

\end{proof}

\begin{definition} Let $A$ and $B$ be two rings and
$A \subseteq B$. We say that $B$ is a faithful extension of $A$,
if the ideal $BI$ is proper in $B$
whenever $I \subset A$ is a proper ideal.
\end{definition}

\begin{lemma} \label{faith}
Let $A$ and $B$ be two rings.
Suppose $A \subseteq B$ and $B$ is a faithful extension of $A$.
If $a,a_1,...,a_k$ are in $A$ and the linear equation
$$a_1x_1+...+a_kx_k=a$$
has a solution in $B$, then
it has a solution in $A.$
\end{lemma}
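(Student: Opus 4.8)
The plan is to convert the statement into a question about extension of ideals. Put $I=(a_1,\dots,a_k)A$. For any ring $C$ containing $A$, a tuple in $C^k$ solves $a_1x_1+\dots+a_kx_k=a$ exactly when $a$ lies in the extended ideal $IC$; so the system is solvable over $B$ precisely when $a\in IB$ and over $A$ precisely when $a\in I$. Thus the lemma is equivalent to the contraction identity $IB\cap A=I$, and since $I\subseteq IB\cap A$ always holds, the whole problem is to prove $IB\cap A\subseteq I$.

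For this I would pass to $\bar A=A/I$ and show that the canonical map $\varphi\colon\bar A\to B/IB\cong\bar A\otimes_A B$, $\bar x\mapsto\bar x\otimes 1$, is injective; since the composite $A\twoheadrightarrow\bar A\xrightarrow{\varphi}B/IB$ has kernel $IB\cap A$, injectivity of $\varphi$ gives exactly $IB\cap A=I$. To see $\varphi$ is injective, fix $\bar a\ne 0$ in $\bar A$. Its annihilator is a proper ideal of $\bar A$, hence corresponds to a proper ideal $J$ of $A$ with $I\subseteq J\subsetneq A$ and $\bar A\bar a\cong A/J$. Applying $-\otimes_A B$ to the inclusion $\bar A\bar a\hookrightarrow\bar A$ gives a map $B/JB\cong A/J\otimes_A B\to\bar A\otimes_A B\cong B/IB$ sending the class of $1$ to $\bar a\otimes 1$. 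Because $J$ is a proper ideal of $A$, faithfulness of $B$ over $A$ forces $JB\ne B$, so the class of $1$ is nonzero in $B/JB$; hence $\varphi(\bar a)=\bar a\otimes 1\ne 0$, and $\varphi$ is injective, which finishes the reduction.

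The single step bearing all the weight is the injectivity of $\varphi$, which is where faithfulness enters; the rest is formal bookkeeping. The one point to watch is that identifying $B/IB$ with $\bar A\otimes_A B$, and preserving the injection $\bar A\bar a\hookrightarrow\bar A$ after tensoring, requires base change along $A\subseteq B$ to be exact, i.e.\ $B$ flat over $A$ — so "faithful extension" should be understood together with flatness (equivalently, as faithfully flat). This costs nothing for the applications in this paper, since polynomial ring extensions and the nonstandard extensions used below are flat; and with flatness available the argument involves no genuine computation, the whole difficulty being concentrated in the injectivity statement already isolated.
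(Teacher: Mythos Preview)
The paper states this lemma without proof, so there is no argument of the paper's to compare against. Your reduction to the contraction identity $IB\cap A=I$ and the faithfully-flat argument you then give are standard and correct.

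You are also right to isolate the flatness hypothesis: under the paper's bare definition of ``faithful extension'' (namely $IB\ne B$ whenever $I\subsetneq A$) the lemma is actually \emph{false}. For a concrete counterexample take $A=k[x^2,x^3]\subseteq B=k[x]$. Since $B$ is integral over $A$, the map $\operatorname{Spec}B\to\operatorname{Spec}A$ is surjective, so every proper ideal of $A$ extends to a proper ideal of $B$; thus the extension is faithful in the paper's sense. Yet the equation $x^2\cdot y=x^3$ has the solution $y=x$ in $B$ and none in $A$. The precise failure in your argument without flatness is the last step: knowing $1\ne 0$ in $B/JB$ does not force its image $\bar a\otimes 1$ to be nonzero in $B/IB$ unless the tensored map $B/JB\to B/IB$ is injective. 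So your caveat is essential, not cosmetic.

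As you observe, this costs nothing for the paper: every invocation of the lemma is for an extension $F[X_1,\dots,X_n]\subseteq F_1[X_1,\dots,X_n]$ arising from a field extension $F\subseteq F_1$, which is free (hence flat) as an $F[X_1,\dots,X_n]$-module, so faithfully flat. Reading the paper's ``faithful'' as ``faithfully flat'' --- consistent with its references to Bourbaki and Matsumura --- your proof goes through verbatim.
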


\begin{lemma} \label{faithful}
Let $F \subseteq F_1$ be a field extension. Then the extension
$F[X_1,...X_n]\subseteq F_1[X_1,...X_n]$ is faithful.
\end{lemma}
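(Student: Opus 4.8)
The plan is to exhibit $F_1[X_1,\dots,X_n]$ as a free module over $A:=F[X_1,\dots,X_n]$ and read off faithfulness from the resulting coordinate decomposition. First I would fix an $F$-vector space basis $(e_i)_{i\in\Lambda}$ of $F_1$ with $e_{i_0}=1$ for some index $i_0$. Every polynomial $g\in F_1[X_1,\dots,X_n]$ has coefficients in $F_1$, and expanding each coefficient in the basis $(e_i)$ and collecting terms gives a representation $g=\sum_{i\in\Lambda} e_i\, g_i$ with $g_i\in A$ and only finitely many $g_i$ nonzero. This representation is unique: if $\sum_{i} e_i\, g_i=0$, then comparing the coefficient of each monomial $X^{\alpha}$ yields $\sum_{i} e_i\,(\text{coefficient of }X^{\alpha}\text{ in }g_i)=0$ in $F_1$, so by $F$-linear independence of the $e_i$ all these coefficients vanish, i.e. $g_i=0$ for all $i$. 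Hence $F_1[X_1,\dots,X_n]=\bigoplus_{i\in\Lambda} e_i\, A$ as $A$-modules; in particular it is a free (so faithfully flat) $A$-algebra.

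Now let $I\subsetneq A$ be a proper ideal and suppose, toward a contradiction, that $1\in I\cdot F_1[X_1,\dots,X_n]$, say $1=\sum_{j=1}^{m} f_j\, g^{(j)}$ with $f_j\in I$ and $g^{(j)}\in F_1[X_1,\dots,X_n]$. Writing $g^{(j)}=\sum_{i\in\Lambda} e_i\, g^{(j)}_i$ as above and substituting, we obtain $1=\sum_{i\in\Lambda} e_i\bigl(\sum_{j=1}^{m} f_j\, g^{(j)}_i\bigr)$. Since $1=e_{i_0}\cdot 1$ and the decomposition $\bigoplus_{i} e_i\, A$ is unique, comparing the $e_{i_0}$-component gives $\sum_{j=1}^{m} f_j\, g^{(j)}_{i_0}=1$. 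But each $f_j\in I$, so $1\in I$, contradicting that $I$ is proper. Therefore $I\cdot F_1[X_1,\dots,X_n]$ is proper, which is exactly the faithfulness of the extension $A\subseteq F_1[X_1,\dots,X_n]$.

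The only point requiring care — and hence the main, though mild, obstacle — is justifying that the basis decomposition is a decomposition of $A$-modules and that projecting onto the component indexed by $i_0$ is legitimate; this rests solely on the $F$-linear independence of the chosen basis of $F_1$. Alternatively, one can phrase the whole argument via faithful flatness: $F_1$ is free over $F$, hence faithfully flat over $F$, so $A\to F_1\otimes_F A=F_1[X_1,\dots,X_n]$ is faithfully flat, and for any proper ideal $I\subsetneq A$ the module $(A/I)\otimes_A F_1[X_1,\dots,X_n]=F_1[X_1,\dots,X_n]/I\, F_1[X_1,\dots,X_n]$ is nonzero because $A/I\neq 0$, which is again precisely the assertion.
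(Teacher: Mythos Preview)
Your argument is correct and is a genuinely different route from the paper's. The paper proves the lemma geometrically via the Nullstellensatz (Lemma~\ref{czero}): if $I=\langle f_1,\dots,f_s\rangle$ is proper in $F[X_1,\dots,X_n]$, then the $f_i$ have a common zero in $F^{alg}\subseteq F_1^{alg}$, so they cannot generate the unit ideal over $F_1$ either. Your proof is purely module-theoretic: you observe that $F_1[X_1,\dots,X_n]\cong F_1\otimes_F A$ is free over $A=F[X_1,\dots,X_n]$ via an $F$-basis of $F_1$ containing $1$, and then project an alleged relation $1=\sum f_j g^{(j)}$ onto the $A$-summand indexed by $1$. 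This avoids the Nullstellensatz entirely and works verbatim for any base-change along a free (or faithfully flat) ring map, not just polynomial rings over fields; the paper's approach, on the other hand, ties in more naturally with the algebro-geometric viewpoint used elsewhere (e.g.\ in Lemma~\ref{irreducible} and Corollary~\ref{prime}). Both are short; yours is more elementary and more general, while the paper's makes the geometric content explicit.
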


\begin{proof}
Let $I \subset F[X_1,...X_n]$ be a proper ideal.
Then since $I$ is finitely generated, $I=\langle f_1,...f_s \rangle$
for some $f_1,...,f_s \in F[X_1,...X_n].$
By \eqref{czero},  $f_1,...,f_s$ have a common zero in $F^{alg}.$ 
Since we may assume ${F}^{alg} \subseteq {F_1}^{alg}$,
there is a common zero of $f_1,...,f_s$ in ${F_1}^{alg}.$
So by \eqref{czero} again, $IF_1[X_1,...X_n] \neq F_1[X_1,...X_n].$
\end{proof}

\textbf{Fact:} The theory of algebraically closed fields is model complete.
\\
For more on this see \cite{Mar}.

\begin{lemma} \label{irreducible}
Let $F_1 \subset F_2$ be a field extension such that both are algebraically closed.
Let $V$ be an irreducible variety in ${F_1}^n$.
Then the Zariski closure of $V$ in ${F_2}^n$ (which respect to the Zariski topology on ${F_2}^n$)
is an irreducible variety in ${F_2}^n$.
\end{lemma}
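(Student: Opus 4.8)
The plan is to realize the Zariski closure of $V$ over $F_2$ as the zero set of the extended ideal, and then to show that this extended ideal stays prime, transferring primality from $F_1$ to $F_2$ by model completeness.

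First I would set $P = I(V) \subseteq F_1[X_1,\dots,X_n]$, the ideal of all polynomials vanishing on $V$. Since $V$ is irreducible, $P$ is prime, and since $V$ is Zariski closed we have $V = V(P)$; as $F_1$ is algebraically closed and $P$ is radical, also $I(V(P)) = P$ by Hilbert's Nullstellensatz. Write $P^e = P\,F_2[X_1,\dots,X_n]$ for the extended ideal. I claim the Zariski closure $\overline V$ of $V$ in $F_2^n$ equals $V(P^e)$: one inclusion is immediate since $V(P^e)$ is Zariski closed and contains $V$, and for the other it suffices to show that every $f \in F_2[X]$ vanishing on $V$ already lies in $P^e$. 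Fixing an $F_1$-basis $(e_\lambda)$ of $F_2$ and writing $f = \sum_\lambda e_\lambda f_\lambda$ with $f_\lambda \in F_1[X]$, each point $a \in V \subseteq F_1^n$ gives $0 = f(a) = \sum_\lambda e_\lambda f_\lambda(a)$ with $f_\lambda(a) \in F_1$, so linear independence forces $f_\lambda(a) = 0$ for all $\lambda$; hence every $f_\lambda$ lies in $I(V) = P$ and $f \in P^e$. Since $F_2$ is algebraically closed, the polynomials vanishing on $V(P^e)$ are exactly $\sqrt{P^e}$, so $\overline V$ is irreducible precisely when $\sqrt{P^e}$ is prime, and it is enough to prove that $P^e$ itself is prime over $F_2$.

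For this I would invoke the van den Dries--Schmidt primality criterion together with model completeness. Fix generators $p_1,\dots,p_m \in F_1[X]$ of $P$ with $\deg p_i \le D$. By that criterion there is a bound $B=B(n,D)$, independent of the ground field, such that an ideal generated by polynomials of degree $\le D$ is prime iff $1$ is not in it and, for all $f,g$ of degree $<B$, $fg$ being in the ideal forces $f$ or $g$ into the ideal. Now $1 \notin P^e$ by faithfulness of $F_1[X] \subseteq F_2[X]$ (Lemma~\ref{faithful}), since $1 \notin P$. For the remaining clause, the effective degree bounds for ideal membership from \cite{DS} turn the statement ``$q \in P^e$'' (for $q$ of degree $<B$) into an \emph{existential} first-order condition on the coefficients of $q$, with parameters the coefficients of $p_1,\dots,p_m$, all of which lie in $F_1$; hence the whole clause is expressed by a single first-order sentence $\sigma$ over $F_1$. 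Since $P$ is prime over $F_1$, the criterion applied over $F_1$ says $\sigma$ holds in $F_1$; since $F_1 \subseteq F_2$ with both algebraically closed, model completeness of the theory of algebraically closed fields gives $F_1 \preceq F_2$, so $\sigma$ holds in $F_2$ too. Therefore $P^e$ is prime over $F_2$, and $\overline V = V(P^e)$ is an irreducible variety in $F_2^n$.

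The main obstacle is the bookkeeping in the last step: one must check carefully that ``$q$ lies in the extension of a fixed finitely generated ideal'' is a first-order condition, uniformly in the coefficients of $q$, which is precisely what the degree bounds for the Hermann--van den Dries--Schmidt representation deliver. If one prefers to bypass model theory, the same conclusion follows from commutative algebra: $F_1[X]/P$ is a domain whose fraction field is a regular extension of $F_1$ (separable because $F_1$ is perfect, with $F_1$ algebraically closed in it because $F_1$ is algebraically closed), so $F_2[X]/P^e \cong F_2 \otimes_{F_1}(F_1[X]/P)$ embeds via flatness of $F_2$ over $F_1$ into the domain $F_2 \otimes_{F_1} \operatorname{Frac}(F_1[X]/P)$ and is hence itself a domain (see \cite{Bour} or \cite{Mat}).
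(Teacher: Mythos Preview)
Your proof is correct and, at its core, uses the same engine as the paper: model completeness of algebraically closed fields, together with the fact that the relevant property can be expressed by a first-order sentence with parameters in $F_1$. The packaging differs. The paper works directly on the variety side: it asserts (without your careful basis argument) that $\mathrm{cl}(V)$ is the zero set in $F_2^n$ of the same polynomials $p_1,\dots,p_s$, and then observes that a putative decomposition $\mathrm{cl}(V)=V_1\cup V_2$ into proper subvarieties would, by $F_1\preceq F_2$, descend to a decomposition of $V$. Your version works on the ideal side: you pass to $P=I(V)$, identify $\overline V$ with $V(P^e)$, and prove $P^e$ is prime by invoking the van den Dries--Schmidt degree bound so that the primality test becomes a single first-order sentence over $F_1$.

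What each buys: the paper's argument is shorter but leaves implicit exactly the point you make explicit, namely \emph{why} reducibility (resp.\ primality) is first-order; without the degree bounds one cannot quantify over ``all pairs of subvarieties'' or ``all products $fg$''. Your route makes this dependence transparent, and your justification that $\overline V=V(P^e)$ via the $F_1$-basis expansion fills a step the paper passes over with ``clearly''. Your alternative commutative-algebra paragraph (regularity of $\mathrm{Frac}(F_1[X]/P)$ over $F_1$ and flatness of $F_2/F_1$) is a genuinely different, model-theory-free argument and is also valid; it is not the route the paper takes.
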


\begin{proof}
Since $V$ is a variety in ${F_1}^n$, there are some polynomials $p_1,...,p_s$ such that
$V$ is the zero set of  $p_1,...,p_s$. Then clearly the Zariski closure
of $V$ in ${F_2}^n$ is the zero set of $p_1,...,p_s$ in ${F_2}^n$.
Call this closure $cl(V).$
Thus both $V$ and $cl(V)$ are defined by the formula
$$\phi(x)= \displaystyle\bigwedge_{i \leq s} p_i(x).$$
Now suppose that $cl(V)$ is not irreducible,
so there are two proper subvarieties $V_1$, $V_2$
of $cl(V)$ such that $cl(V)={V_1}\cup{V_2}.$
Then since the theory of algebraically closed fields is model complete,
we deduce that $V$ is reducible also.
\end{proof}

\begin{cor} \label{prime}
Let $F_1 \subset F_2$ be a field extension such that $F_1$ is
algebraically closed. Then
$I$ is a prime ideal in $F_1[X_1,...,X_n]$ iff $I{F_2}[X_1,...,X_n]$
is a prime ideal in ${F_2}[X_1,...,X_n].$

\end{cor}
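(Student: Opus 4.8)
The plan is to reduce both implications to the case of algebraically closed fields, where Lemma~\ref{irreducible} and the Nullstellensatz apply, with the passage between field levels handled by a base-change argument. Throughout write $F[\mathbf X]:=F[X_1,\dots,X_n]$. The elementary fact I would use repeatedly is: for any field extension $F\subseteq F'$ and any ideal $J\subseteq F[\mathbf X]$ one has $JF'[\mathbf X]\cap F[\mathbf X]=J$. Indeed, fixing an $F$-basis $(b_\lambda)_\lambda$ of $F'$ with some $b_{\lambda_0}=1$ gives $F'[\mathbf X]=\bigoplus_\lambda b_\lambda F[\mathbf X]$ as an $F[\mathbf X]$-module, whence $JF'[\mathbf X]=\bigoplus_\lambda b_\lambda J$, and intersecting with the $b_{\lambda_0}$-summand returns $J$. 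In particular the canonical map $F[\mathbf X]/J\hookrightarrow F'[\mathbf X]/JF'[\mathbf X]$ is injective.

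For the implication ``$IF_2[\mathbf X]$ prime $\,\Longrightarrow\,$ $I$ prime'' nothing further is needed: $I$ is proper by Lemma~\ref{faithful}, and by the injectivity just noted $F_1[\mathbf X]/I$ is a subring of the domain $F_2[\mathbf X]/IF_2[\mathbf X]$, hence itself a domain. Note that this direction does not use that $F_1$ is algebraically closed.

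For the converse, suppose $I$ is prime in $F_1[\mathbf X]$ and fix an algebraically closed field $\Omega$ with $F_2\subseteq\Omega$ (for instance $\Omega=F_2^{alg}$), so that $F_1\subseteq\Omega$ as well. Applying the base-change fact to $F_2\subseteq\Omega$, it suffices to show that $I\Omega[\mathbf X]$ is prime: then $F_2[\mathbf X]/IF_2[\mathbf X]$ embeds into the domain $\Omega[\mathbf X]/I\Omega[\mathbf X]$. Now $F_1$ and $\Omega$ are both algebraically closed. Since $I$ is prime it is radical, so $V:=V(I)\subseteq F_1^n$ is an irreducible variety whose ideal is $I$. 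By Lemma~\ref{irreducible} the Zariski closure of $V$ inside $\Omega^n$ is irreducible, and by the proof of that lemma this closure is exactly the zero set of $I\Omega[\mathbf X]$ in $\Omega^n$; hence its ideal, which by the Nullstellensatz (cf.\ \cite{Eisenbud}) equals $\sqrt{I\Omega[\mathbf X]}$, is a prime ideal of $\Omega[\mathbf X]$.

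What remains---and the step where I expect the real work to lie---is to check that $I\Omega[\mathbf X]$ is already radical, so that it coincides with the prime ideal $\sqrt{I\Omega[\mathbf X]}$ just produced. This amounts to saying that the domain $F_1[\mathbf X]/I$ stays reduced after the scalar extension $\Omega\otimes_{F_1}(-)$, i.e.\ that $\Omega[\mathbf X]/I\Omega[\mathbf X]\cong\Omega\otimes_{F_1}\!\big(F_1[\mathbf X]/I\big)$ is reduced; since an algebraically closed field is perfect, this is a standard consequence of the theory of separable (regular) field extensions, for which I would cite \cite{Bour}. With this the converse is complete. Alternatively, one can sidestep the reducedness point entirely: fixing generators of $I$ of bounded degree, the Dries--Schmidt primality bound recalled in the introduction renders ``$I$ is prime'' a first-order condition on their coefficients, and since the theory of algebraically closed fields is model complete this condition transfers from $F_1$ to $\Omega$ directly.
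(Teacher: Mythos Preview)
Your argument is correct and follows the same overall strategy as the paper: the backward implication via the contraction identity $IF_2[\mathbf X]\cap F_1[\mathbf X]=I$ (the paper simply cites Lemma~\ref{faithful}), and the forward implication via Lemma~\ref{irreducible} together with the Nullstellensatz. The paper's proof is a four-line version of what you wrote.

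Where you differ is in rigor rather than method, and on two points you are more careful than the paper. First, Lemma~\ref{irreducible} is stated for two algebraically closed fields, whereas the corollary only assumes $F_1$ algebraically closed; you handle this by passing to $\Omega=F_2^{alg}$ and then descending, while the paper silently applies the lemma as if $F_2$ were algebraically closed (in the paper's only application, to $L\subset{}^*K$ in the proof of Theorem~B, ${}^*K$ is in fact algebraically closed, so no harm is done there). Second, you explicitly isolate the step ``$I\Omega[\mathbf X]$ is radical'', which the paper's phrase ``again by Nullstellensatz, $IF_2[\mathbf X]$ is prime'' suppresses; your justification via perfectness of $F_1$ (so that $\Omega\otimes_{F_1}(F_1[\mathbf X]/I)$ stays reduced) is the standard one, and your alternative via the primality bound and model completeness of ACF is also valid and very much in the spirit of the paper. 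Either fills the gap cleanly.
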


\begin{proof}
Suppose $I=(f_1,...,f_s)$ is a prime ideal in $F_1[X_1,...,X_n].$
Let $V=V(I)$ be the variety given by $I.$
Then by Nullstellensatz $V$ is irreducible.
So by \eqref{irreducible}, the variety $cl(V)$ is also irreducible in ${F_2}^n$.
Again by Nullstellensatz, $I{F_2}[X_1,...,X_n]$ is prime.
The converse is true by \eqref{faithful} since
$(I{F_2}[X_1,...,X_n]) \cap F_1[X_1,...,X_n]=I.$
\end{proof}

For the following Lemma see  \cite[1.8]{DS}.

\begin{lemma} \label{Kfaith}
The extension $^*{K}[X_1,...X_n] \subset  {^*(K[X_1,...X_n])}$ is faithful.
\end{lemma}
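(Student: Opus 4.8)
The plan is to prove the contrapositive directly: for every proper ideal $I$ of ${^*K}[X_1,\dots,X_n]$ I will show $1\notin I\cdot{^*(K[X_1,\dots,X_n])}$, which gives that $I\cdot{^*(K[X_1,\dots,X_n])}$ is a proper ideal of ${^*(K[X_1,\dots,X_n])}$. Suppose toward a contradiction that $1$ does lie in this ideal. By the definition of the ideal generated by a subset there is then a \emph{standard}-finite relation
$$1=f_1g_1+\dots+f_mg_m,\qquad m\in{\mathbb N},$$
with $f_1,\dots,f_m\in I$ and $g_1,\dots,g_m\in{^*(K[X_1,\dots,X_n])}$. Each $f_i$ lies in ${^*K}[X_1,\dots,X_n]$ and hence has finite degree; set $D=\max_i\deg(f_i)\in{\mathbb N}$.

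The main input is the uniform degree bound of Hermann, reproved by van den Dries and Schmidt \cite{DS} and quoted in the Introduction: for every field $F$ there is a constant $C=C(n,D)$, depending only on $n$ and $D$, such that if $p_1,\dots,p_m\in F[X_1,\dots,X_n]$ have degree at most $D$ and $1\in\langle p_1,\dots,p_m\rangle$, then $1=\sum_i p_ih_i$ for some $h_i$ of degree at most $C$. For fixed $n$, $D$ and $m$ this is a first-order statement about the polynomial ring $K[X_1,\dots,X_n]$ in the many-sorted language of Section 2 (solvability of one bounded linear system over the base forces solvability of another), so by the transfer principle it also holds in ${^*(K[X_1,\dots,X_n])}$ with the \emph{same} constant $C(n,D)$, and $C(n,D)\in{\mathbb N}$ because $n$ and $D$ are standard. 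Applying the transferred statement to our $f_1,\dots,f_m$ together with the witnessing tuple $(g_1,\dots,g_m)$, we obtain $h_1,\dots,h_m\in{^*(K[X_1,\dots,X_n])}$ with $\deg(h_i)\le C(n,D)$ and $1=f_1h_1+\dots+f_mh_m$.

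Since $C(n,D)$ is a standard natural number, every $h_i$ has finite degree, and the finite-degree elements of ${^*(K[X_1,\dots,X_n])}$ are exactly the members of ${^*K}[X_1,\dots,X_n]$; thus $h_1,\dots,h_m\in{^*K}[X_1,\dots,X_n]$. Then $1=\sum_i f_ih_i$ shows $1\in\langle f_1,\dots,f_m\rangle\subseteq I$ inside ${^*K}[X_1,\dots,X_n]$, contradicting that $I$ is proper; this completes the argument. I expect the delicate point to be the transfer step: the degree function and the bound $C(n,D)$ are indexed by standard data, so one really transfers a separate sentence for each triple $(n,D,m)$, and one must check that ``has degree at most $D$'' and the two bounded linear systems involved are genuinely definable in the structure to which transfer is applied — which is precisely why the framework of Section 2 (and of \cite{DS}) is set up as it is. A secondary point to record carefully is the identification of ${^*K}[X_1,\dots,X_n]$ with the finite-degree part of ${^*(K[X_1,\dots,X_n])}$, since that is what lets the bounded-degree cofactors $h_i$ descend.
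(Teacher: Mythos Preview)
Your argument is correct, but it runs in the opposite direction from the paper's development. The paper does not prove this lemma at all: it simply cites \cite[1.8]{DS}, where van den Dries and Schmidt establish directly (via commutative algebra, in fact showing the stronger assertion that ${^*(K[X_1,\dots,X_n])}$ is faithfully \emph{flat} over ${^*K}[X_1,\dots,X_n]$) that the extension is faithful. The paper then \emph{uses} this faithfulness to deduce the uniform degree bound (the theorem immediately following the lemma); obtaining such bounds without Hermann's computations is precisely the purpose of the nonstandard machinery here and in \cite{DS}.

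You instead take Hermann's bound $C(n,D)$ as an independent input---legitimate, since Hermann's original proof in \cite{Her} is algorithmic and uses no nonstandard analysis---and transfer it to force the cofactors down into ${^*K}[X_1,\dots,X_n]$. This is sound, and it neatly exhibits the equivalence between ``uniform degree bounds hold'' and ``the extension is faithful''. But within the paper's logical flow it would be circular, since the degree bound you invoke is presented as a \emph{consequence} of the lemma you are proving. One minor wording issue: the hypothesis you transfer is not ``solvability of a bounded linear system'' but rather membership of $1$ in the ideal, i.e.\ an existential quantifier over the full internal polynomial ring (the $g_i$ may have nonstandard degree); this is still first-order in the relevant structure, and your use of the witnessing tuple $(g_1,\dots,g_m)$ handles it correctly.
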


Using \eqref{Kfaith}, we can obtain the existence of the constant $c_1.$
The original proof in \cite{DS} also uses the concept of flatness to prove the
existence of the constant $c_1$. For the details see  \cite[1.11]{DS}.
\begin{theorem} 
If $f_0,f_1,...,f_s$ in $K[X_1,...,X_n]$ all have degree less than $D$ and
$f_0$ is in $\langle f_1,...,f_s \rangle$, then
$f_0=\displaystyle\sum_{i=1}^{s}f_ih_i$ for certain ${h_i}$ whose degrees are bounded
by a constant $c_1=c_1(n,D)$ depending only on $n$ and $D$.

\end{theorem}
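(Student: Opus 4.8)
The plan is to run the contradiction argument of \cite{DS}: if no constant $c_1(n,D)$ works then, collecting counterexamples and forming one ultraproduct, we reach a contradiction. First I would record two reductions that make the bookkeeping painless. (i) The number of generators may be taken to depend only on $n$ and $D$: replacing $f_1,\dots,f_s$ by a maximal $K$-linearly independent subfamily leaves the ideal $\langle f_1,\dots,f_s\rangle$ unchanged and turns any representation of the subfamily into one of the original family with the same degrees (set the dropped coefficients to $0$); since these polynomials have degree $<D$ they lie in a $K$-vector space of dimension $N:=\binom{n+D-1}{n}$, and after padding with copies of the zero polynomial we may assume $s=N$. (ii) We may assume $K$ algebraically closed: passing to $K^{alg}$ affects neither the membership $f_0\in\langle f_1,\dots,f_N\rangle$ (by Lemma~\ref{faithful} together with Lemma~\ref{faith}), nor, for any fixed $m$, the solvability of the finite system of linear equations asserting ``$f_0=\sum_i f_ih_i$ with all $\deg h_i\le m$'' (linear algebra over a field). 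Finally I note that this last system is encoded by a first-order formula $\psi_m$ in the coefficients of $f_0,\dots,f_N$, in the language of rings.

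Now suppose no such $c_1$ exists. For each $d\in\mathbb N$ pick an algebraically closed field $K_d$ and polynomials $f_0^{(d)},\dots,f_N^{(d)}\in K_d[X_1,\dots,X_n]$ of degree $<D$ with $f_0^{(d)}\in\langle f_1^{(d)},\dots,f_N^{(d)}\rangle$ but admitting no representation with all coefficients of degree $\le d$. Fix a nonprincipal ultrafilter $\mathcal U$ on $\mathbb N$, put $\mathbf K=\prod_{d\in\mathbb N}K_d/\mathcal U$ (again an algebraically closed field), and let $f_j=[f_j^{(d)}]\in\mathbf K[X_1,\dots,X_n]$, which has degree $<D$. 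Let $\mathbf P=\prod_{d\in\mathbb N}K_d[X_1,\dots,X_n]/\mathcal U$; this ring contains $\mathbf K[X_1,\dots,X_n]$ as the subring of classes of sequences of uniformly bounded degree, and is strictly larger, standing to $\mathbf K[X_1,\dots,X_n]$ as $\,{}^*(K[X_1,\dots,X_n])$ stands to $K[X_1,\dots,X_n]$ in the previous subsections.

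The key point is that $\mathbf K[X_1,\dots,X_n]\subseteq\mathbf P$ is a faithful extension. This is Lemma~\ref{Kfaith}, whose proof applies to an ultraproduct of polynomial rings over varying fields, but it may also be seen directly: for a proper ideal $I=\langle g_1,\dots,g_r\rangle$, pick a common zero $\xi\in\mathbf K^n$ of the $g_k$ by Lemma~\ref{czero}; if we had $1=\sum_k g_kp_k$ with $p_k\in\mathbf P$, then choosing representatives $g_k=[g_k^{(d)}]$, $p_k=[p_k^{(d)}]$, $\xi=[\xi^{(d)}]$ and applying {\L}o\'s's theorem, some $d$ satisfies both $1=\sum_k g_k^{(d)}p_k^{(d)}$ in $K_d[X_1,\dots,X_n]$ and $g_k^{(d)}(\xi^{(d)})=0$ for all $k$, whence $1=0$. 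Given faithfulness: from $f_0^{(d)}=\sum_i f_i^{(d)}h_i^{(d)}$ for each $d$ we obtain $f_0=\sum_{i=1}^N f_i\mathbf h_i$ in $\mathbf P$ with $\mathbf h_i=[h_i^{(d)}]$, so the linear equation $f_0=\sum_{i=1}^N f_iY_i$ with coefficients in $\mathbf K[X_1,\dots,X_n]$ is solvable in $\mathbf P$; by Lemma~\ref{faith} it already has a solution $(g_1,\dots,g_N)$ in $\mathbf K[X_1,\dots,X_n]$. Put $m:=\max_i\deg g_i$.

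It remains to close the loop. Since $\psi_m$ holds of the coefficients of $f_0,\dots,f_N$ in $\mathbf K$, {\L}o\'s's theorem puts the set $S$ of those $d$ for which $\psi_m$ holds of the coefficients of $f_0^{(d)},\dots,f_N^{(d)}$ over $K_d$ into $\mathcal U$; but no $d\ge m$ lies in $S$, since by construction $K_d$ admits no representation with coefficients of degree $\le d$, a fortiori none of degree $\le m$. Thus $S$ is finite, contradicting $S\in\mathcal U$, and $c_1(n,D)$ must exist. The one genuinely delicate aspect is conceptual: ideal membership $f_0\in\langle f_1,\dots,f_s\rangle$ is not a first-order condition, which is precisely why the argument must pass through the larger ring $\mathbf P$ and the faithfulness lemma rather than apply {\L}o\'s's theorem to it directly; and the reduction to a fixed number $N$ of generators must be carried out before forming the ultraproduct, lest $s$ become a nonstandard integer and $(f_i)_i$ a merely internal hyperfinite family.
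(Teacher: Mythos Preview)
Your argument is correct and is precisely the van den Dries--Schmidt compactness proof that the paper itself only gestures at via Lemma~\ref{Kfaith} and the reference to \cite[1.11]{DS}. The two variations you introduce---forming the ultraproduct over \emph{varying} algebraically closed fields $K_d$ so that uniformity of $c_1(n,D)$ in the ground field is explicit, and verifying faithfulness of $\mathbf K[X_1,\dots,X_n]\subseteq\mathbf P$ directly by evaluating at a common zero and invoking \L o\'s's theorem rather than appealing to flatness---are refinements of details the paper leaves to \cite{DS}, not a genuinely different route.
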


This is also from \cite[2.5]{DS}:

\begin{theorem} \label{sd-prime}
$I$ is a prime ideal in ${^*K}[X_1,...,X_n]$ iff $I{^*(K[X_1,...,X_n])}$
is a prime ideal in ${^*(K[X_1,...,X_n])}.$
\end{theorem}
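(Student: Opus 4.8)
The final statement to prove is Theorem~\ref{sd-prime}: $I$ is a prime ideal in ${}^*K[X_1,\dots,X_n]$ iff $I\cdot{}^*(K[X_1,\dots,X_n])$ is prime in ${}^*(K[X_1,\dots,X_n])$. Let me sketch how I would approach it.

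\medskip

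\textbf{Proof proposal.} The plan is to exploit the transfer principle together with the degree-bound theorem just stated, reducing the primality test in each ring to a first-order statement about bounded-degree polynomials. First I would recall, as in \cite{DS}, that primality of a $D$-type ideal $I$ in $F[X_1,\dots,X_n]$ over any field $F$ is equivalent to a \emph{uniformly bounded} condition: there is a bound $B=B(n,D)$ so that $I$ is prime iff $1\notin I$ and for all $f,g$ of degree at most $B$, $fg\in I$ implies $f\in I$ or $g\in I$. The point of the bound $B$ is that ``$p\in I$'' for a polynomial $p$ of bounded degree can itself be expressed as solvability of a linear system with bounded-degree cofactors (using the constant $c_1$ from the degree-bound theorem, applied with $f_0=p$), hence is a first-order condition on the coefficients of the generators of $I$. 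So ``$I$ is prime'' becomes, for a $D$-type ideal, a first-order property of the finite tuple of coefficients of a fixed set of generators.

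\medskip

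Next I would set up the transfer. Take $I\subseteq {}^*K[X_1,\dots,X_n]$; since ${}^*K[X_1,\dots,X_n]$ is Noetherian (being a polynomial ring over a field, though one must be a bit careful since ${}^*K$ is a genuine field and the ring has standardly many variables, so this is fine), $I$ is finitely generated, say $I=\langle f_1,\dots,f_s\rangle$ with the $f_i$ of some (possibly nonstandard) degree $D$. Apply the transferred version of the bounded primality criterion: this holds in ${}^*(K[X_1,\dots,X_n])$ viewed through ${}^*K[X_1,\dots,X_n]$ because the criterion, with its bound $B(n,D)$ and the cofactor bound $c_1(n,D)$, is a first-order schema valid over every field, hence valid over ${}^*K$ by transfer. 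Then I would compare the two primality conditions: for $I$ in ${}^*K[X_1,\dots,X_n]$ and for $J:=I\cdot{}^*(K[X_1,\dots,X_n])$ in the larger ring. The containment ``$p\in J$'' for $p\in {}^*K[X_1,\dots,X_n]$ of bounded degree is equivalent, by Lemma~\ref{Kfaith} (the extension ${}^*K[X_1,\dots,X_n]\subset {}^*(K[X_1,\dots,X_n])$ is faithful) together with Lemma~\ref{faith}, to ``$p\in I$'' — because solvability of the relevant linear system over the big ring descends to the subring. Moreover every element of $J$ of bounded degree already lies in ${}^*K[X_1,\dots,X_n]$: writing an element of $J$ as $\sum f_ih_i$ with $h_i\in {}^*(K[X_1,\dots,X_n])$, one uses the degree bound (transfer of the $c_1$ theorem) to replace the $h_i$ by cofactors of bounded degree, and then faithfulness to take those cofactors in ${}^*K[X_1,\dots,X_n]$. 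Hence the bounded-degree ``test pairs'' $(f,g)$ and the membership relation are literally the same for $I$ and for $J$, so the two bounded primality criteria coincide, and therefore $I$ is prime iff $J$ is prime.

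\medskip

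\textbf{Main obstacle.} The delicate point is legitimizing the claim that ``$I$ is prime'' is genuinely first-order in the coefficients of a generating set when the ideal lives in a nonstandard polynomial ring: one must ensure the bound $B(n,D)$ and the cofactor degree bound $c_1(n,D)$ transfer correctly even when $D$ is a nonstandard integer, and that the generating set has internally finitely many generators of internal degree $D$. This is exactly where the results of \cite{DS} (the existence of $c_1(n,D)$ as a \emph{function} of $n$ and $D$, transferable as such) do the work: the schema ``for all $D$, for all $D$-type ideals, primality $\iff$ (bounded test)'' is a single first-order statement in the language of fields with the degree parameter, so transfer applies verbatim. Once that is in hand, the descent of bounded-degree membership via faithfulness (Lemmas~\ref{faith} and \ref{Kfaith}) is routine, and the equivalence follows. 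I would also double-check the trivial direction — that $1\notin I$ iff $1\notin J$ — which is immediate from faithfulness of the extension.
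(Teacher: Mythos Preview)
Your sketch is essentially the argument of \cite[2.5]{DS}, which is all the paper does here: it states the result and cites \cite{DS} without reproducing the proof. So you are on the right track, and your use of the bounded primality criterion together with faithfulness (Lemmas~\ref{faith} and~\ref{Kfaith}) is exactly how van den Dries and Schmidt proceed.

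One correction, which actually dissolves your ``main obstacle'': the generators $f_1,\dots,f_s$ of $I$ lie in the ordinary polynomial ring ${}^*K[X_1,\dots,X_n]$ over the field ${}^*K$, so their degrees are \emph{standard} natural numbers and $D\in\mathbb{N}$. There is no need to transfer $B(n,D)$ or $c_1(n,D)$ to a nonstandard $D$. On the ${}^*K[X_1,\dots,X_n]$ side you simply apply the \cite{DS} primality criterion over the field ${}^*K$ (it holds over every field). On the ${}^*(K[X_1,\dots,X_n])$ side you transfer the first-order sentence ``for any $s$-tuple of polynomials of degree $\le D$, the generated ideal is prime iff $1$ is not in it and the bounded test with bound $B(n,D)$ holds'', with $D$, $s$ and $B(n,D)$ fixed standard parameters. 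Since elements of ${}^*(K[X_1,\dots,X_n])$ of degree at most the standard bound $B(n,D)$ are exactly the elements of ${}^*K[X_1,\dots,X_n]$ of that degree, the two bounded tests range over the same pairs $(f,g)$; faithfulness then matches the membership conditions $p\in I$ and $p\in J$ for such $p$, and the equivalence follows. With this simplification your argument goes through cleanly.
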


\subsection{UFD with the p-property}
\begin{definition} We say that $R$ is a UFD with the p-property if
$R$ is an unique factorization domain endowed with an absolute value such that
every unit has absolute value 1 and if there are primes $p$ and $q$ satisfying
$$|p|<1<|q|,$$
then there is another prime $r$ non-associated to $p$
with $|r|<1.$

\end{definition}

\textbf{Examples}
\begin{itemize}
\item $\mathbb Z$ is a UFD with the p-property whose primes have absolute value bigger than 1.
\item $\mathbb Z_p$ ($p$-adic integers) is a UFD with the p-property whose only prime has
absolute value $1/p.$
\item Let $\gamma \in (0,1)$ be a transcendental number.
Then the ring $S={\mathbb Z}[\gamma]$ is a unique factorization domain since it is isomorphic
to ${\mathbb Z}[X]$ and its units are only 1 and -1. 
We put the usual absolute value on $S.$
Then $S$ has infinitely many primes $p$ with $|p|<1$ and 
infinitely many primes $q$ with $|q|>1.$
So $S$ is a UFD with the p-property.
\end{itemize}

\begin{lemma} \label{primes}
Suppose $R$ is a UFD with the p-property. If there are primes $p$ and $q$
with $|p|<1<|q|$,
then there are
infinitely many non-associated primes with absolute value strictly less than 1
and infinitely many non-associated primes with absolute value strictly bigger than 1.

\end{lemma}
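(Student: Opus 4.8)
The plan is to treat the two assertions separately. For the primes of absolute value $>1$ I would argue by contradiction with a Euclid-style device; for the primes of absolute value $<1$, where that device degenerates, I would iterate the p-property.

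First I would handle the $|\cdot|>1$ side. Assume, toward a contradiction, that $q_1,\dots,q_m$ are all the non-associated primes of $R$ with $|q_i|>1$ --- a nonempty list, since $|q|>1$ --- and put $b=q_1\cdots q_m$, so $|b|>1$ by multiplicativity. By the reverse triangle inequality $|b^N-1|\ge|b|^N-1$, which exceeds $1$ once $N$ is large; fix such an $N$. Since $q_i\mid b^N$ we have $b^N-1\equiv-1\pmod{q_i}$, so $b^N-1$ is a nonzero non-unit coprime to every $q_i$; hence none of its prime factors is associated to any $q_i$, and by the standing assumption each such prime factor has absolute value $\le1$. Multiplicativity of $|\cdot|$ together with $|\mathrm{unit}|=1$ then forces $|b^N-1|\le1$, contradicting $|b^N-1|>1$. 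This gives infinitely many non-associated primes of absolute value $>1$.

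For the $|\cdot|<1$ side the same trick fails --- for $|a|<1$ the quantity $|a^N-1|$ tends to $1$, not to $0$ --- and this is exactly the case the p-property is meant to cover. Again I would argue by contradiction: suppose $p_1,\dots,p_k$ are all the non-associated primes of $R$ with $|p_i|<1$. Since $|p_1|<1<|q|$, the p-property furnishes a prime $r\not\sim p_1$ with $|r|<1$, so $k\ge2$ and $r$ is associated to one of $p_2,\dots,p_k$. Iterating, from any finite list of non-associated primes of absolute value $<1$ one should extract a further such prime not already on the list, contradicting $k<\infty$.

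The hard part will be making that iteration rigorous, since the p-property as literally stated produces only a prime not associated to the \emph{single} prime supplied, so one must rule out the iteration cycling among $p_1,\dots,p_k$. I would do this by invoking the p-property in the stronger form in which it is used later --- that from a prime of absolute value $<1$ together with a prime of absolute value $>1$ one always obtains a new prime of absolute value $<1$ avoiding any prescribed finite set of primes --- or else by passing to the localization $R[p_2^{-1},\dots,p_k^{-1}]$ in order to reduce to the case of a single prime of absolute value $<1$; the latter reduction needs an extra check, as the localized ring need not a priori satisfy the p-property in the given sense.
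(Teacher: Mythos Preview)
Your treatment of the $|\cdot|>1$ case is correct and essentially matches the paper's: the paper uses $q_1^{\,n}q_2\cdots q_k+1$ in place of your $(q_1\cdots q_m)^N-1$, but the Euclid device is the same.

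The $|\cdot|<1$ case, however, has a genuine gap that you yourself flag but do not close. Neither proposed fix works. There is no ``stronger form'' of the p-property available elsewhere in the paper: this very lemma \emph{is} the strengthening invoked later, so appealing to such a form would be circular. And the localization $R[p_2^{-1},\dots,p_k^{-1}]$ fails the standing hypothesis that all units have absolute value $1$, since $p_2,\dots,p_k$ become units there with $|p_i|<1$; once that condition is lost the p-property (as defined) no longer applies, and the reduction collapses. So as written your argument cannot get past two non-associated small primes.

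The paper avoids the cycling problem entirely with a direct Euclid-type construction tailored to the $<1$ regime. Starting from $k\ge 2$ non-associated primes $p_1,\dots,p_k$ of absolute value $<1$ (the p-property supplies the second one), set $A=p_1\cdots p_k$ and form
\[
S=\sum_{i=1}^{k}(A/p_i)^m.
\]
Each $A/p_i$ is a nonempty product of primes of absolute value $<1$, so $|A/p_i|<1$ and hence $|S|<1$ for $m$ large. Modulo $p_j$ only the $j$-th summand survives, and it is coprime to $p_j$; thus $S$ is coprime to every $p_j$ (in particular $S\neq 0$). Since $|S|<1$ and units have absolute value $1$, $S$ is a nonzero non-unit whose prime factors all lie outside $\{p_1,\dots,p_k\}$, and at least one of them must have absolute value $<1$. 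That is the missing idea your iteration was looking for.
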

\begin{proof}
We know there are at least two non-associated primes with absolute value less than 1.
Let $p_1,...,p_k$ (for $k \geq 2$) be non-associated primes with absolute value less than 1.
Put $A=p_1...p_k$. Now choose $m$ large enough such that
$\bigg|\displaystyle\sum_{i=1}^{k}(A/p_i)^m\bigg| < 1.$
Since this element is not a unit, it must be divisible by a prime whose absolute
value strictly less than 1.
This gives us a new prime. For the second part, given $q_1,...,q_k$
primes of absolute value larger than 1,
for large $n$  the element ${q_1}^nq_2...q_k + 1$
provides a new prime that has absolute value greater than 1.
\end{proof}

\section{Proof of Theorem A and Theorem B}
In this section we will give the proofs of Theorem A and Theorem B.
\begin{thA}

Let $R$ be a ring with a height function of $\theta$-type.
For all $n \geq 1$, $D \geq 1$, $H \geq 1$ there are two constants $c_1(n,D)$ and $c_2(n,D,H,\theta)$
such that if $f_1,...,f_s$ in $R[X_1,...X_n]$ have no common zero in $K^{alg}$ with
$\deg(f_i) \leq D$ and $h(f_i) \leq H$, then there exist nonzero $a$ in $R$ and
$h_1,...,h_s$ in $R[X_1,...X_n]$ such that
\begin{itemize}
\item[(i)] $a=f_1h_1+...+f_sh_s$
\item[(ii)] $\deg(h_i) \leq c_1$
\item[(iii)] $h(a),h(h_i) \leq c_2$
\item[(iv)] If $R$ is a UFD with the p-property
and $h(x)=|x|$ is the absolute value on $R$,
then we can choose $a$ such that $\gcd (a,a_1,...,a_m)=1$ where $a_1,...,a_m$ 
are all elements that occur as some coefficient of some $h_i.$
\end{itemize}
\end{thA}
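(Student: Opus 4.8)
The plan is to argue by contradiction, using an ultraproduct to manufacture one ``universal'' counterexample and then collapsing it by passing to the finite part $R_{fin}$; the degree-bound theorem (which yields $c_1=c_1(n,D)$) is exactly what turns the decisive assertions into first-order statements. Fix $n,D,H\geq 1$ and $\theta:\mathbb{N}\rightarrow\mathbb{N}$. First reduce to the case $s\leq s_0:=\binom{n+D+c_1}{n}$: the relation $1=\sum_i f_ih_i$ with $\deg h_i\leq c_1$ is a linear system over $K$ in the coefficients of the $h_i$ involving only monomials of degree $\leq D+c_1$, so if it is solvable it is solvable using at most $s_0$ of the columns $f_iX^{\beta}$, hence at most $s_0$ of the $f_i$; for a general tuple one keeps those, solves, and sets the remaining $h_i$ equal to $0$, which affects none of (i)--(iv).

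Suppose (i)--(iii) fails for this $n,D,H,\theta$. Then for each $N\in\mathbb{N}$ there are a ring $R_N$ with a $\theta$-type height function $h_N$ and $f_1^{(N)},\dots,f_s^{(N)}\in R_N[X_1,\dots,X_n]$ (with $s\leq s_0$; after refining, $s$ independent of $N$) having no common zero in $K_N^{alg}$, with $\deg f_i^{(N)}\leq D$ and $h_N(f_i^{(N)})\leq H$, and such that every representation $a=\sum_i f_i^{(N)}g_i$ with $a\neq 0$ and $\deg g_i\leq c_1$ has a coefficient of $h_N$-height $>N$. Fix a nonprincipal ultrafilter $\mathcal{U}$ on $\mathbb{N}$ and form the two-sorted ultraproduct $(\widehat R,\widehat h)=\prod_N(R_N,h_N)/\mathcal{U}$. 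By \L o\'s's theorem $\widehat R$ is an integral domain, $\widehat h$ still satisfies $\widehat h(x+y),\widehat h(xy)\leq\theta(m)$ whenever $\widehat h(x),\widehat h(y)\leq m$ with $m\in\mathbb{N}$ standard, and $\widehat f_i:=[f_i^{(N)}]$ are genuine polynomials of degree $\leq D$ over $\widehat R$ with $\widehat h(\widehat f_i)\leq H$. Since each $f^{(N)}$ has no common zero, by \eqref{czero} and the degree-bound theorem $1=\sum_i f_i^{(N)}h_i^{(N)}$ with $\deg h_i^{(N)}\leq c_1$, an existential statement about finitely many coefficients; hence by \L o\'s $1\in\langle\widehat f_1,\dots,\widehat f_s\rangle$ in $\widehat K[X_1,\dots,X_n]$, where $\widehat K=\mathrm{Frac}(\widehat R)$.

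The core step is to produce a \emph{finite-height} representation over $\widehat R$. Since $\widehat h(\widehat f_i)\leq H$, all coefficients of the $\widehat f_i$ lie in the subring $R_{fin}$ of $\widehat R$; put $L=\mathrm{Frac}(R_{fin})\subseteq\widehat K$. The ideal $\langle\widehat f_i\rangle$ of $L[X_1,\dots,X_n]$ extends to an improper ideal of $\widehat K[X_1,\dots,X_n]$, hence is itself improper by \eqref{faithful}; then the degree bound of \cite{DS} applied over the field $L$ gives $1=\sum_i\widehat f_i\,\widehat h_i'$ with $\widehat h_i'\in L[X_1,\dots,X_n]$ of degree $\leq c_1$. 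Choosing $d\in R_{fin}\setminus\{0\}$ that clears the finitely many denominators of the coefficients of the $\widehat h_i'$, we get $d=\sum_i\widehat f_i\,(d\widehat h_i')$ with $d\neq 0$, $\deg(d\widehat h_i')\leq c_1$ and all coefficients in $R_{fin}$, hence of finite height. Picking $m\in\mathbb{N}$ above these finitely many finite heights, the sentence
\[
\exists a\;\exists g_1,\dots,g_s\;\bigl(a\neq 0\wedge a=\sum_i f_ig_i\wedge\bigwedge_i\deg g_i\leq c_1\wedge h(a)\leq m\wedge\bigwedge_i h(g_i)\leq m\bigr)
\]
holds in $(\widehat R,\widehat h)$ with parameters $\widehat f_i$, hence in $(R_N,h_N)$ with parameters $f_i^{(N)}$ for a set of $N$ in $\mathcal{U}$; being nonprincipal, $\mathcal{U}$ forces this set to contain some $N>m$, contradicting the choice of $R_N$. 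This proves (i)--(iii), with a constant $c_2=c_2(n,D,H,\theta)$ depending on neither $R$ nor $s$. I expect this core step to be the main obstacle: the passage to $L=\mathrm{Frac}(R_{fin})$, and the fact that the $c_1$-bound is available over the field $L$, are exactly what make it go through.

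For (iv) let $R$ be a UFD with the $p$-property and $h=|\cdot|$. Given a solution $a=\sum_i f_ih_i$ meeting (i)--(iii), set $\delta=\gcd(a,a_1,\dots,a_m)$; then $a/\delta=\sum_i f_i(h_i/\delta)$ is again a degree-$\leq c_1$ solution, now with $\gcd(a/\delta,a_1/\delta,\dots,a_m/\delta)=1$, so only its height needs control. Factor $\delta=\delta_{\geq}\cdot\delta_{<}$ into its prime-power factors of absolute value $\geq 1$ and $<1$; as $|\delta_{\geq}|\geq 1$, dividing by $\delta_{\geq}$ increases no height, so we may assume $\delta=\delta_{<}$. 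If $\delta_{<}=1$ we are done; otherwise $R$ has a prime of absolute value $<1$, so either every prime of $R$ has absolute value $\leq 1$ -- whence $|\cdot|\leq 1$ on $R$, $|a/\delta|\leq 1$, and we are done -- or $R$ has primes of absolute value both $>1$ and $<1$, in which case by \eqref{primes} there are infinitely many of each. Keeping $|a/\delta|$ bounded by $c_2$ in this last case is precisely where the $p$-property must be used, and this I expect to be the delicate remaining point.
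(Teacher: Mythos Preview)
Your argument for (i)--(iii) is correct and is essentially the paper's proof: it uses the same faithfulness step $L[X]\subset\widehat K[X]$ with $L=\mathrm{Frac}(R_{fin})$, the same degree bound over $L$, and the same clearing-of-denominators to land in $R_{fin}$. The only cosmetic difference is that you build a single ultraproduct of counterexamples and read the contradiction off via \L o\'s, whereas the paper first invokes compactness to get one ring $R$ satisfying all the failure-sentences $\psi_m$ and then realizes the full type inside a saturated ${}^*R$; these are interchangeable packaging of the same idea.

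Part (iv), however, has a genuine gap in the mixed case (primes of absolute value both $>1$ and $<1$), and your proposed route of dividing by $\gcd$ cannot be completed there. After you strip $\delta_{\geq}$ you are left with $\delta_{<}$ having $|\delta_{<}|<1$, and dividing by $\delta_{<}$ \emph{increases} all absolute values by a factor $|\delta_{<}|^{-1}$ that is not controlled by $n,D,H$; the paper flags exactly this obstruction (cancellation by small elements can blow up the height), so there is no uniform bound on $|a/\delta|$ or on the coefficients of $h_i/\delta$ along this path.

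The paper's fix is not to cancel but to \emph{add} a tailored zero so that the gcd becomes $1$ without any division. Assuming $s\geq 2$ and $f_1f_2\neq 0$, one uses Lemma~\ref{primes} to pick a prime $p$ with $|p|<1$ and $p\nmid a$; sets $d=\gcd(\text{coefficients of }f_1,f_2)$ and chooses $k$ with $|p^kf_i/d|\leq 1$; and with $v=c_1+1$ replaces $h_1,h_2$ by $g_1=h_1+X_1^{\,v}p^kf_2/d$ and $g_2=h_2-X_1^{\,v}p^kf_1/d$. Since $f_1g_1+f_2g_2=f_1h_1+f_2h_2$, the identity $a=\sum_i f_ig_i$ persists; the new high-degree coefficients are exactly those of $p^kf_1/d$ and $p^kf_2/d$, whose gcd is $p^k$, which is coprime to $a$, forcing the overall gcd to be $1$. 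Degrees rise only to $v+D$, still a constant in $(n,D)$, and heights stay below $\max(c_2,1)$. This is the missing idea you need for (iv).
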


\begin{remark}
The constant $c_1$ does not depend on $s$ because the vector space

$$
V(n,D)=\{f \in K[X_1,...X_n]: \deg(f) \leq D\}
$$
is  finite dimensional
over $K$. In fact the dimension is $q(n,D)={{n+D} \choose {n}}$.
Given $1=f_1h_1+...+f_sh_s$, we may always assume
$s \leq q=q(n,D)$ because if $s>q$ then $f_1,...,f_s \in V(n,D)$
are linearly dependent over $K$.
Assume first that $r \leq q$ many of them are linearly independent.
Therefore the other terms $f_{r+1},...,f_s$ can be written 
as a linear combination of $f_1,...,f_r$
over $K.$ Thus the equation $1=f_1h_1+...+f_sh_s$ may be transformed into another
equation $1=f_1g_1+...+f_rg_r.$ Consequently if $1 \in \langle f_1,...,f_s\rangle$,
then $1 \in \langle f_{i_1},...,f_{i_r}\rangle$ where $r \leq q$ and $i_j \in \{1,...,s\}.$
Hence, we can always assume $s=q.$ Similarly the constant $c_2$ does not depend on $s.$
Moreover, none of the constants depend on $R.$
\end{remark}

\begin{remark}
There is also a direct proof of Theorem A as follows:
Using the degree bound $B(n,D)$ 
for the polynomials
$g_1,...,g_s$ in a Bezout expression $1=f_1g_1+...+g_sf_s$, we can
derive a height bound since the degree bound allows
to translate the problem to solving a linear system of equations with precise number
of unknowns equations and the height function satisfies some additive and multiplicative properties.
However this computational method is also complicated 
since the bounds for the height function depend on $\theta$ which is implicitly given.
Thus in practice this method is ineffective.
For this reason and to show how the problem is related to Model Theory, 
we prefer nonstandard methods as in \cite{DS}.

\end{remark}

\begin{remark}
If  $R$ is a ring with absolute value which has arbitrarily small nonzero elements,
then we can multiply
both sides of the equation 
$$a=f_1h_1+...+f_sh_s$$
by some small $\epsilon \in R$.
Therefore the height bound $c_2$ can be taken 1
and the result becomes trivial.
Note that $(iv)$ in Theorem A prevents us from doing this
if there are no small units in $R.$
However  if there is a unit $u$ with $|u|<1$, then multiplying
both sides of the equation with powers of
$u$ the height can be made small again. So the interesting case is when 
there are no small units
which is equivalent to all the units having absolute value 1.
Note also that if $|ab| <1$ then $|a|$ can be
very big and $|b|$ can be very small.
So cancellation can make the height larger
if there are sufficiently small and big elements
in the ring. Thus for the equation
$$a=f_1h_1+...+f_sh_s,$$ simply
dividing by $\gcd(a,a_1,...,a_m)$ may not work 
in order to obtain  $(iv)$ in Theorem A.
\end{remark}

\textbf{Proof of Theorem A:}
If $s=1$ then by Nullstellensatz, $f_1$ must be a nonzero constant.
Thus we may assume that $s \geq 2$ and $f_1f_2$ is not 0.
By Theorem 2.9, the constant $c_1$ exists and it only depends on $n$ and $D.$
Now we prove the existence of the constant $c_2.$
Assume $n$, $D$ and $H$ are given and there is no bound $c_2.$
Therefore for every $m \geq 1$ there exists an integral domain
$R_m$ with a height function $ht_m$ of $\theta$-type
and $f_1,...,f_s$ in $R_m[X_1,...X_n]$ with $\deg f_i \leq D$ and $ht_m(f_i) \leq H$ witnessing to this.
Thus in the field of fractions $K_m$ of $R_m$,
there exist $g_1,...,g_s$ in $K_m[X_1,...,X_m]$ with $\deg g_i \leq c_1$ and 
$$1=f_1h_1+....+f_sg_s,$$
but for all $h_1,...,h_s \in K_m[X_1,...,X_n]$
with $\deg h_i \leq c_1$,
$$1=f_1h_1+...+f_sh_s$$ implies
$\displaystyle\max_j{ht_m(a_j)} > m$ where $a_j \in R_m$
is an element that occurs as a numerator
or denominator of some $h_i$.
Set
$$V_m(n,A)=\{f \in K_m[X_1,...,X_n]: \deg (f) \leq A\}$$ where $A \in \mathbb N.$
By Remark 3.1, this is a finite dimensional vector space over $K$ and the dimension is $q(n,A).$
So we can consider its elements as a finite tuple over $K_m$ and
any element of $V_m(n,A)$ is of the form $(a_1,...,a_{q(n,A)}).$

Also put $V_{R_m}(n,A):= V_m(n,A) \cap R_m[X_1,...,X_n].$
Note that the height of a tuple in $V_{R_m}(n,A)$ is the maximum of its coordinates.
Our language contains a symbol $h$ for the height function and
also the ring operations.
Consider the formula $\phi_m(v_1,...,v_s)$:
$$
\exists a^1_1\exists b^1_1 ... 
\exists a^1_{q(n,c_1)}\exists b^1_{q(n,c_1)}... 
\exists a^s_1\exists b^s_1 ... \exists a^s_{q(n,c_1)}\exists b^s_{q(n,c_1)}
$$

$$
\bigg(\bigg(\bigwedge_{i=1}^{s} (h(v_i)\leq H)\bigg) 
\wedge\bigg(1=\displaystyle\sum_{i=1}^{s}v_i\bigg(\frac {a^i_1}{b^i_1}
,...,\frac {a^i_{q(n,c_1)}}{b^i_{q(n,c_1)}}\bigg)\bigg) \bigg)
$$

$$
\wedge\bigg(\forall d^1_1\forall e^1_1 ...\forall d^1_{q(n,c_1)}\forall e^1_{q(n,c_1)}
...\forall d^s_1\forall e^s_1 ...\forall d^s_{q(n,c_1)}\forall e^s_{q(n,c_1)}
$$

$$
\bigg(\bigg(\bigwedge_{i}(\max_j(h(d^i_j),h(e^i_j)) \leq m)\bigg)
\rightarrow
 \bigg(1 \neq \displaystyle\sum_{i=1}^{s}v_i\bigg(\frac{d^i_1}{e^i_1},...,
\frac{d^i_{q(n,c_1)}}{e^i_{q(n,c_1)}}\bigg)\bigg) \bigg).
$$
Note that this formula can be seen as a formula in $R_m$
by seeing each $v_i$ as the tuple of variables representing the polynomial $f_i$.
We see that $R_m \models \psi_m$ where
$\psi_m=\exists v_1...\exists v_s \phi_m(v_1,...,v_s).$
By compactness there is an integral domain $R$ with a height function
$h_R$ of $\theta$-type that satisfies all $\psi_m.$
Now we consider the set of formulas
$$p(v_1,...,v_s)=\{\phi_m(v_1,...,v_s): m=1,2,3...\}.$$
This is a set of formulas over $^*V_R(n,D)$ using countably many parameters.
This set is finitely consistent, so by saturation there is a realization
$f_1, ..., f_s$ in $^*(R[X_1,...,X_n])$.
But according to $p(v_1,...,v_s)$, the polynomials $f_1, ..., f_s$ are in $R_{fin}[X_1,...,X_n]$
and their degrees are less than $D$.
Furthermore the linear system
$$f_1Y_1+...+f_sY_s = 1$$
has a solution in $^*K[X_1,...,X_n]$ 
(because bounded degree polynomials of $^*(K[X_1,...,X_n])$ 
are in $^*K[X_1,...,X_n]$) but not in $L[X_1,...,X_n]$.
This contradicts \eqref{faith} because the extension 
$L[X_1,...,X_n] \subset {^*K[X_1,...,X_n]}$ is faithful by \eqref{faithful}.

Hence we know that given $f_1,...f_s \in R[X_1,..,X_n]$
with no common zeros in $K^{alg}$ with $\deg(f_i) \leq D$ and $h(f_i) \leq H$,
there are $h_1,...,h_s$ in $K[X_1,...X_n]$ such that
$1=f_1h_1+...+f_sh_s$ and $\deg(h_i) \leq c_1(n,D).$ Moreover $s \leq q(n,D)$ and
$h(e) \leq c_3(n,D,H,\theta)$ where $e \in R$ is an element
which occurs as a numerator or denominator for some coefficient of some $h_i.$
Let $b_1,...,b_m$ be all the elements in $R$
that occur as a denominator for some coefficient of some $h_i.$
Note that $m=m(n,D) \leq q^2$ depends on $n$ and $D$ only.
Also we know that $h(b_i) \leq c_3.$ Put
$$a=b_1...b_m.$$
By the multiplicative properties of the height function,
we get $h(a) \leq c_4(n,D,H,\theta)$ for some $c_4.$
Now we see that
$$a=\displaystyle\sum_{i=1}^{s}f_i(ah_i),$$
$f_i$ and $ah_i$ are in $R[X_1,..,X_n]$ and $\deg(ah_i)=\deg(h_i) \leq c_1.$
Moreover, again by the multiplicative properties of the height function,
we have $h(ah_i) \leq c_5(n,D,H,\theta).$ Now take $c_2=\max(c_4,c_5).$
Therefore we obtain $(i)$, $(ii)$ and $(iii).$
\\
Now we prove $(iv)$.
Assume $R$ is a UFD with the p-property.
We need to choose $a$ such that
$\gcd (a,a_1,...,a_m)=1$ where $a_1,...,a_m$
are all elements that occur as some coefficient of some $h_i.$
If all the primes in $R$ have absolute value bigger than 1 or smaller than 1, then
we can divide both sides of the equation 
$$a=f_1h_1+f_2h_2+...+f_sh_s$$
by $\gcd(a,a_1,...,a_m)$ and get the result because
if all the primes in $R$ have absolute value bigger than 1, then
cancellation makes the height smaller and if
all the primes in $R$ have absolute value less than 1
then height is bounded by 1.
The remaining case is when there are primes of absolute value bigger than 1
and primes of absolute value smaller than 1.
By \eqref{primes}, there are infinitely many primes with absolute value
strictly less than 1. Now choose a prime $p$ such that $|p|<1$ and $p$ does
not divide $a.$
Let $d$ be the greatest common divisor of all coefficients of $f_1$ and $f_2.$
Then, the coefficients of $f_1/d$ and $f_2/d$ have no common divisor.
On the other hand, since there are both small and large
elements in the ring, $d$ can be very small and so
$f_1/d$ and $f_2/d$ may have very large absolute values.
Thus choose a natural number $k$ such that $p^kf_1/d$ and $p^kf_2/d$ have absolute value
less than 1. Put $v=c_1(n,D)+1.$
Then we have
$$0=f_1({X_1}^vp^kf_2/d)+f_2(-{X_1}^vp^kf_1/d).$$
Therefore we obtain that
$$a=f_1(h_1+{X_1}^vp^kf_2/d)+f_2(h_2 -{X_1}^vp^kf_1/d )+...+f_sh_s$$

$$=f_1g_1+f_2g_2+...+f_sg_s$$
where $\deg g_i \leq D(c_1 +1)=c(n,D)$ and $h(g_i)\leq c_2$.
Observe that 
$$\gcd (a,a_1,...,a_m)=1$$
where $a_1,...,a_m$
are all elements that occur as some coefficient of some $g_i.$
\qed

Next we prove Theorem B.
\\
From now on, K denotes the algebraic numbers $\overline{\mathbb Q}$ and $^*K$ its nonstandard extension.
Set $$L=K_{fin}=\{x \in {^*{K}}: h(x) \in {\mathbb R}_{fin}\}.$$
Before proving Theorem B we need one more lemma.

\begin{lemma} \label{alg}
L is an algebraically closed field.
\end{lemma}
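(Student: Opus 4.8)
The plan is to show that $L = K_{fin}$ is algebraically closed by taking an arbitrary nonconstant monic polynomial $p(T) = T^d + c_{d-1}T^{d-1} + \dots + c_0$ with coefficients $c_i \in L$ and producing a root of $p$ inside $L$. Since $K = \overline{\mathbb{Q}}$ is algebraically closed, by transfer $^*K$ is algebraically closed (this is a first-order property of the field $K$, expressible by the scheme of sentences asserting that every monic polynomial of each fixed degree has a root). Hence $p$ has a root $\alpha \in {}^*K$. The whole point is then to verify that $\alpha$ actually lies in $L$, i.e. that $h(\alpha) \in \mathbb{R}_{fin}$.

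**First I would** reduce to the case where the $c_i$ lie in $R_{fin} = $ the finite part, rather than just $L = \operatorname{Frac}(R_{fin})$: writing each $c_i$ as a quotient of finite-height elements and clearing denominators multiplies $\alpha$ by a finite-height element, which does not change whether $h(\alpha)$ is finite (using that $R_{fin}$ is a ring and the multiplicativity of $H$, i.e. $H(\alpha\beta) \le H(\alpha)H(\beta)$, transferred to $^*K$). So assume $h(c_i) \le N$ for all $i$, where $N \in \mathbb{N}$ is some finite bound. Now I would invoke the height inequality, Lemma \ref{heightt}, in its transferred (nonstandard) form: if $p(T) = (T-\alpha_1)\cdots(T-\alpha_d)$ with the $\alpha_j \in {}^*K$, then $\prod_j H(\alpha_j) \le 2^{2d+1} H(p)$, and in particular $H(\alpha) \le H(\alpha_1)\cdots H(\alpha_d) \le 2^{2d+1}H(p)$ since each $H(\alpha_j) \ge 1$. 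Here $d$ is a fixed \emph{standard} natural number, so $2^{2d+1}$ is finite, and $H(p) = \max_i H(c_i)$ is finite because $h(c_i) = \log H(c_i) \le N$. Therefore $H(\alpha)$ is finite, so $h(\alpha) = \log H(\alpha)$ is finite, i.e. $\alpha \in L$.

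**The main subtlety** — and the step I expect to need the most care — is making precise that Lemma \ref{heightt} transfers to the nonstandard setting and applies when the coefficients and roots live in $^*K$ rather than in a genuine number field. The clean way is to note that for a fixed standard degree $d$, the statement
$$\forall c_0 \cdots \forall c_{d-1}\, \forall \alpha\, \Bigl(\alpha^d + \textstyle\sum_i c_i\alpha^i = 0 \;\rightarrow\; H(\alpha) \le 2^{2d+1}\max_i\max(1, H(c_i))\Bigr)$$
is a first-order sentence in the (many-sorted) language with the field operations and the height function $H$, true in $K = \overline{\mathbb{Q}}$ by Lemma \ref{heightt}, hence true in $^*K$ by transfer. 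Once this is in place, the argument above goes through verbatim. One should double-check the edge cases: if $p$ is linear then $\alpha = -c_0 \in L$ trivially; and the reduction to monic coefficients in $R_{fin}$ must be done so that the leading coefficient, once cleared, does not introduce an infinite factor — but since that leading coefficient is itself a finite product of finite-height elements (the original denominators), this is fine. Thus $L$ is algebraically closed.
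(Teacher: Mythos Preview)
Your proof is correct and follows essentially the same route as the paper: invoke (the transfer of) the height inequality, Lemma~\ref{heightt}, to bound $H(\alpha)$ for any root $\alpha\in{}^*K$ of a monic polynomial with finite-height coefficients, concluding $\alpha\in L$. Note only that your reduction step is superfluous in this setting, since here $L=K_{fin}$ is defined directly as the finite-height elements of $^*K$ and $H(1/\alpha)=H(\alpha)$ already makes it a field, so the coefficients $c_i$ have finite height from the outset.
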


\begin{proof}
Since the logarithmic height function behaves under algebraic operations and inverse, $L$ is a field.
By height inequality \eqref{heightt} we see that $L$ is algebraically closed.
\end{proof}

\textbf{Proof of Theorem B:}

First note that if $J=(f_1,...,f_s)$ is an ideal of $D$-type then the number
of generators of $J$ can be taken less than
$$q=q(n,D)=\dim_{K}\{f \in K[X_1,...,X_n]: \deg f \leq D \}.$$
So we can always assume $s \leq q.$
We know the existence of the bound B=$B(n,D)$ by  \cite{DS}.
Now we prove the existence of the bound $C(n,D,H).$
Suppose there is no such a bound. This means for all $m>0$
there is an ideal $I_m$ of $(D,H)$-type of $\overline{\mathbb Q}[X_1,...,X_n]$ which is not prime
such that for all $f,g$ with $\deg f$, $\deg g$ less than $B$ and
$h(f)$, $h(g)$ less than $m$, $fg\in I$ implies $f$ or $g$ in $I$.
Then by compactness there is an ideal $I$ of $(D,H)$-type of 
$^*(\overline{\mathbb Q}[X_1,...,X_n])$ such that I is not prime but for all
$m>0$ if $f,g$ are of degree less than $B$ and are of height 
less than $m$, $fg\in I$ implies $f$ or $g$ in $I$.
Now we see that the ideal $I$ is prime in $L[X_1,...,X_n].$
However it is not prime in ${^*K}[X_1,...,X_n]$ by \eqref{sd-prime}.
This contradicts to \eqref{prime} since $L$ is algebraically closed by \eqref{alg}.
\\

\textbf{Question:} Can we compute $C(n,D,H)$ in Theorem B effectively?

\section{Further Results}
In this section we prove some consequences on Theorem B.
For details we refer the reader to \cite{Bour, Eisenbud}.
First recall the followings:
\begin{itemize}

\item An ideal $J$ is a primary ideal if and only if
$Ass_{R}(R/J)$ has exactly one element.
\item Every ideal $J$ (through primary decomposition)
is expressible as a finite intersection of primary ideals.
The radical of each of these ideals is a prime ideal
and these primes are exactly the elements of $Ass_{R}(R/J)$ .
\item Any prime ideal minimal with respect to containing an ideal $J$ is in $Ass_{R}(R/J)$.
These primes are precisely the isolated primes.

\end{itemize}

\begin{cor} Let $n \in \mathbb N$, $X=(X_1,...,X_n)$, $I$ be an ideal of $L[X].$

\begin{itemize}

\item [(1)]If $p_k,...p_m$ are the distinct minimal primes of $I$ then
$${p_1}{^*K}[X],... ,{p_m}{^*K}[X]$$
are the distinct minimal primes of $I{^*K}[X_1,...,X_n].$
\item [(2)] $\sqrt{I{^*K}[X]}={\sqrt{I}}{^*K}[X].$
\item [(3)] If $M$ is an $L[X]$-module, then
$$Ass_{{^*K}[X]}(M \otimes_{L[X]} {^*K}[X])=\{p{^*K}[X]: p \in Ass_{L[X]}(M)\}.$$
\item [(4)]$I$ is primary ideal iff $I{^*K}[X]$ is primary ideal of ${^*K}[X].$
\item [(5)] Let $I=I_1 \cap...\cap I_m$ be a reduced primary decomposition, $I_k$ being a
$p_k$-primary ideal.
Then $$I{^*K}[X]=I_1{^*K}[X]\cap...\cap I_m{^*K}[X]$$
is a reduced primary decomposition of $I{^*K}[X],$ and
$I_k{^*K}[X]$ is a $p_k{^*K}[X]$-primary ideal.

\end{itemize}

\end{cor}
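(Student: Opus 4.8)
The plan is to reduce the entire corollary to two inputs that are already in hand: that $L$ is algebraically closed (Lemma~\ref{alg}), so that Corollary~\ref{prime} applies to the field extension $L \subseteq {^*K}$ and sends primes of $L[X]$ to primes of ${^*K}[X]$; and that ${^*K}$ is a field containing $L$, so that ${^*K}[X] = {^*K} \otimes_L L[X]$ is a \emph{free}, hence faithfully flat, $L[X]$-algebra, and is Noetherian by the Hilbert basis theorem (as ${^*K}$ is a field, first-order transfer). I would record at the outset the consequences of faithful flatness to be used throughout: extension of ideals commutes with finite intersections, $J{^*K}[X] \cap L[X] = J$ for every ideal $J$ of $L[X]$ (this strengthens the faithfulness of Lemma~\ref{faithful}, which by itself would not suffice), and hence $p \mapsto p{^*K}[X]$ is injective on ideals and preserves strict non-containments.

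I would prove $(2)$ first and deduce $(1)$ from it. Write $\sqrt{I} = p_1 \cap \cdots \cap p_m$ with $p_1,\dots,p_m$ the minimal primes of $I$. By Corollary~\ref{prime} each $p_i{^*K}[X]$ is prime, so $\sqrt{I}{^*K}[X] = \bigcap_i p_i{^*K}[X]$ (flatness) is a radical ideal containing $I{^*K}[X]$; since also every element of $\sqrt{I}$ has a power in $I \subseteq I{^*K}[X]$, one gets $\sqrt{I{^*K}[X]} \subseteq \sqrt{I}{^*K}[X] \subseteq \sqrt{I{^*K}[X]}$, which is $(2)$. For $(1)$, the minimal primes over $I{^*K}[X]$ coincide with those over $\sqrt{I{^*K}[X]} = \bigcap_i p_i{^*K}[X]$; since the $p_i{^*K}[X]$ are prime and pairwise incomparable — contracting an inclusion $p_i{^*K}[X]\subseteq p_j{^*K}[X]$ to $L[X]$ forces $i=j$ — these minimal primes are exactly $p_1{^*K}[X],\dots,p_m{^*K}[X]$, and they are distinct by injectivity of extension.

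For $(3)$ I would invoke the standard behaviour of associated primes under flat base change between Noetherian rings (see \cite{Bour, Eisenbud}): $\operatorname{Ass}_{{^*K}[X]}(M \otimes_{L[X]} {^*K}[X]) = \bigcup_{p \in \operatorname{Ass}_{L[X]}(M)} \operatorname{Ass}_{{^*K}[X]}\bigl({^*K}[X]/p{^*K}[X]\bigr)$. Each such $p$ is prime in $L[X]$, so $p{^*K}[X]$ is prime by Corollary~\ref{prime} and $\operatorname{Ass}_{{^*K}[X]}({^*K}[X]/p{^*K}[X]) = \{p{^*K}[X]\}$; injectivity of $p \mapsto p{^*K}[X]$ then identifies the union with the set in $(3)$. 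Part $(4)$ follows at once from the criterion recalled before the corollary — $I$ is primary iff $\operatorname{Ass}_{L[X]}(L[X]/I)$ is a singleton — applied with $(3)$ to $M = L[X]/I$, using $M \otimes_{L[X]} {^*K}[X] = {^*K}[X]/I{^*K}[X]$: the two associated-prime sets are singletons simultaneously.

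Finally $(5)$: from $I = I_1 \cap \cdots \cap I_m$ with $I_k$ a $p_k$-primary ideal, flatness gives $I{^*K}[X] = \bigcap_k I_k{^*K}[X]$; by $(4)$ each $I_k{^*K}[X]$ is primary and by $(2)$ its radical is $\sqrt{I_k}{^*K}[X] = p_k{^*K}[X]$, which is prime, so this is a primary decomposition with associated primes $p_k{^*K}[X]$. These are distinct because the $p_k$ are and extension is injective, and the decomposition is irredundant, since $I_k{^*K}[X] \supseteq \bigcap_{j\neq k} I_j{^*K}[X] = \bigl(\bigcap_{j\neq k} I_j\bigr){^*K}[X]$ would contract to $I_k \supseteq \bigcap_{j\neq k} I_j$, contradicting irredundancy of the original. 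I do not expect a serious obstacle: all genuinely nonstandard content is already isolated in Lemma~\ref{alg} and Corollary~\ref{prime}, and the only point requiring care is to use full faithful flatness (free base change), rather than the weaker notion of faithful extension, when commuting extension with intersections and when contracting ideals; once that is noted the rest is the standard flat base-change machinery.
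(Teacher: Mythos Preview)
Your proposal is correct and follows essentially the same approach as the paper: both rely on $L$ being algebraically closed so that Corollary~\ref{prime} applies, on the flat base-change theorem for associated primes from \cite{Bour}, and on the singleton-associated-prime characterization of primary ideals to obtain (4) from (3). The only cosmetic differences are that the paper derives (2) from (1) rather than the reverse, and that you make the role of faithful flatness explicit where the paper more tersely cites Lemma~\ref{faithful} alongside the Bourbaki reference.
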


\begin{proof}
(1) is an immediate consequence of Theorem B.
(2) follows from (1) since radical of an ideal is the intersection of minimal prime ideals
which contain the ideal. Since $L[X]$ is noetherian,
(3) follows from \cite[Chapter 4, 2.6, Theorem 2]{Bour}  and \eqref{faithful}.
To prove (4), suppose that $I$ is a $p$-primary ideal.
So we get $Ass_{L[X]}(L[X]/I)=\{p\}.$
Applying (3) with $M=L[X]/I$ we obtain that
$Ass_{{^*K}[X]}({^*K}[X]/I)=\{p{^*K}[X]\}.$
This proves (4). The converse of (4) is true by \eqref{faithful}.
(5) follows from (4).

\end{proof}

Now we give the standard corollaries.
For the following corollary, the existence of the constant 
$E(n,D,H)$ is new.

\begin{cor}
There are constants $B(n,D)$, $C(n,D)$ and $E(n,D,H)$ such that
if $I$ is an ideal of $(D,H)$-type, then
\begin{itemize}

\item [(1)]$\sqrt{I}$ is generated by polynomials of degree less than $B$ and 
height less than $E$, if $f \in \sqrt{I}$ then $f^C \in I.$
\item [(2)]There are at most $B$ associated primes of $I$ and each
generated by polynomials of degree less than $B$ and height less than $E.$
\item[(3)] $I$ is primary iff $1 \notin I$, and for all $f,g$ of degree less than $B$
and height less than $E$, if $fg \in I$ then $f \in I$ or $g^C \in I.$
\item[(4)] There is a reduced primary decomposition of $I$ consisting of at most $B$
primary ideals, each of which is generated by polynomials of degree at most $B$
and height at most $E.$

\end{itemize}

\end{cor}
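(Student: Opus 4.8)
The plan is to run, once for each of the four items, the nonstandard argument that proved Theorem~B, now with the preceding Corollary playing the role that \eqref{prime} and \eqref{sd-prime} played there. First I would separate out what is genuinely new. The degree bound $B(n,D)$ for generators of $\sqrt I$, of the associated primes, and of the primary components of a $D$-type ideal, the exponent bound $C(n,D)$ with $f\in\sqrt I\Rightarrow f^{C}\in I$, and the degree bound occurring in the primality and primarity tests are all already furnished by \cite{DS}; none of them mentions heights. So the only thing to be produced is the \emph{uniform} height bound $E(n,D,H)$ on the generators of $\sqrt I$, of the associated primes, and of the primary components of a $(D,H)$-type ideal, together with the height bound on the test polynomials in item (3); this is where nonstandard methods are needed.

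Take item (1). Suppose no admissible $E$ exists. Then for every $m\ge 1$ there is a $(D,H)$-type ideal $I_m$ of $\overline{\mathbb Q}[X]$ whose radical, although generated by polynomials of degree $<B$ by \cite{DS}, has no generating family consisting of polynomials of degree $<B$ and height $<m$. Using the effective degree bounds of \cite{DS} for generators of a radical and for the ideal membership problem, the property ``$\sqrt I$ has a generating family of at most $q(n,B)$ polynomials of degree $<B$ and height $<m$'' is a first-order condition on $I$ (i.e.\ on its generators) with standard parameters, and the failure of this condition for a given $m$ entails its failure for every smaller $m$. Hence, by $\aleph_1$-saturation of $^*(\overline{\mathbb Q}[X])$, there is an internal $(D,H)$-type ideal $I$ of it whose radical has no generating family of polynomials of degree $<B$ and \emph{finite} height. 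Since the generators $f_1,\dots,f_s$ of $I$ have degree $\le D$ and height $\le H$ with $H$ standard, they lie in $L[X]$, so $I_0:=(f_1,\dots,f_s)$, the ideal they generate in $L[X]$, is a $(D,H)$-type ideal of $L[X]$ and $I=I_0\,{^*(\overline{\mathbb Q}[X])}$. Now run the base-change dictionary along the tower $L[X]\subset{^*K[X]}\subset{^*(\overline{\mathbb Q}[X])}$, each stage being a faithfully flat extension of polynomial rings over an algebraically closed field ($L$ algebraically closed by \eqref{alg}, $^*K$ by transfer; faithfulness by \eqref{faithful} and \eqref{Kfaith}): by the preceding Corollary and by \eqref{prime}, \eqref{sd-prime} together with their identically proved analogues for radicals one gets $\sqrt I=(\sqrt{I_0})\,{^*(\overline{\mathbb Q}[X])}$. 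By \cite{DS} applied over the field $L$, the ideal $\sqrt{I_0}$ is generated by polynomials $g_1,\dots,g_t\in L[X]$ of degree $<B$, which automatically have finite height; so $g_1,\dots,g_t$ generate $\sqrt I$, contradicting the choice of $I$.

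Items (2), (3) and (4) follow the same template, with the same tower and the same descent to an ideal over $L$. For (2) one replaces ``generators of $\sqrt I$'' by ``the associated primes of $I$'', encodes their number and generators as a first-order condition via the \cite{DS} bounds, and uses part~(3) of the preceding Corollary; for (4) one does the same for the primary components, using parts~(4) and (5) of the preceding Corollary. For (3), if no $E$ works, the limiting ideal is a non-primary internal $(D,H)$-type ideal $I$ for which the test ``$fg\in I\Rightarrow f\in I$ or $g^{C}\in I$'' holds for all $f,g$ of degree $<B$ and finite height; descending to $I_0\subseteq L[X]$, this bounded test --- with $B$ taken at least as large as the \cite{DS} primarity-testing degree bound --- forces $I_0$ to be primary, whence $I=I_0\,{^*(\overline{\mathbb Q}[X])}$ is primary along the tower by part~(4) of the preceding Corollary, a contradiction.

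I expect the main obstacle to be organizational rather than conceptual. One must fix a single $B(n,D)$ large enough to serve simultaneously as the \cite{DS} generating-degree bound for radicals, associated primes, and primary components and as the degree bound in the primality and primarity tests; one must phrase each of (1)--(4) as a bona fide first-order schema in $m$ so that saturation applies; and one must keep the three-stage tower $L[X]\subset{^*K[X]}\subset{^*(\overline{\mathbb Q}[X])}$ straight, so that the preceding Corollary (stated for $L[X]\subset{^*K[X]}$) can be composed with the analogous transfer at the level $^*K[X]\subset{^*(\overline{\mathbb Q}[X])}$. Once the encodings and the tower are in place, each of the four contradictions is immediate from the preceding Corollary and the algebraic closedness of $L$.
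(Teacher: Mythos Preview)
Your proposal is correct and is precisely the argument the paper has in mind: the paper's own proof simply says that $B(n,D)$ and $C(n,D)$ come from \cite{DS}, that $E(n,D,H)$ follows from the preceding Corollary by arguments ``similar to the proof of Theorem~B,'' and leaves the details to the reader. You have supplied exactly those details---the saturation step, the descent to $I_0\subseteq L[X]$, and the two-stage transfer along $L[X]\subset{^*K[X]}\subset{^*(K[X])}$ via Corollary~4.1 and the \cite{DS} analogues---so there is nothing to add.
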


\begin{proof}
We know the existence of $B(n,D)$ and $C(n,D)$ by \cite{DS}.
The existence of $E(n,D,H)$
follows from the previous corollary.
Proofs are similar to the proof of Theorem B.
Details are left to the reader.
\end{proof}

\textbf{Question:} Can we compute $E(n,D,H)$ effectively in Corollary 4.2?

\section{Concluding Remarks and Further Discussion of Theorem A}
In this section we discuss the Theorem A in terms of unique factorization domains,
valuations and some arithmetical functions.
Also we give some counter examples for the Theorem A for non-height functions.

\subsection{UFD with the 1-property}
\begin{definition}
We say that $R$ is a UFD with the 1-property if
$R$ is an unique factorization domain endowed with an absolute value such that
every unit has absolute value 1 and there is only one prime $p$ of
absolute value less than 1 and infinitely many primes $q$ of absolute value greater than 1.
\end{definition}
\
\textbf{Example:}
Let $R$ be an unique factorization domain and
$p$ be a prime in $R$.
Put the $p$-adic absolute value on $R$ with $|p|_p=1/2.$
Let $c>1$ be any real number.
On $R[X]$ we define
$$|a_0+a_1X+...+a_kX^k|=\max_{i}c^i|a_i|_p. $$
Then $R[X]$ is a UFD with the 1-property whose only small prime is $p.$
\\

We proved the Theorem A for UFD with the p-property.
Thus the remaining case is when $R$ is a UFD with the 1-property.
Now we show the Theorem A is not true for a UFD with the 1-property.
The reason behind this is the fact that an element has small absolute value if and
only if its $p$-adic valuation is very large where $p$ is the unique prime of
absolute value less than 1.

\begin{prop}
Let $R$ be a UFD with the 1-property.
Then we cannot ensure the correctness of $(iii)$ and $(iv)$ simultaneously in Theorem A.
\end{prop}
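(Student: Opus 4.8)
The plan is to exhibit, inside the class of UFDs with the 1-property, a single ring together with a sequence of instances $f_1,f_2$ of bounded degree and height for which every Bézout identity $a=f_1h_1+f_2h_2$ satisfying $(iv)$ is forced to have $|a|$ or some $|h_i|$ as large as one pleases; this rules out a uniform $c_2$. I take $R$ to be the example ring constructed just above: $R={\mathbb Z}[t]$, with $p$ a fixed rational prime, $|\cdot|_p$ the $p$-adic absolute value normalized so $|p|_p=1/2$, and $|a_0+a_1t+\cdots+a_kt^k|=\max_i 2^{\,i}|a_i|_p$; by that example $R$ is a UFD with the 1-property whose unique small prime is $p$. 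We apply Theorem A with $n=1$, $D=H=1$, and $\theta(m)=(m+1)^2$. For $N\ge 2$ put
$$\hat f_1=tX_1-1,\qquad \hat f_2=t^{N}X_1-1,\qquad f_1=p^{N}\hat f_1,\qquad f_2=p^{N}\hat f_2 .$$
Routine checks give $\deg_{X_1}f_i=1$, $|f_1|=2^{1-N}\le 1$, $|f_2|=1$, and $f_1,f_2$ have no common zero in $K^{\mathrm{alg}}$ ($K={\mathbb Q}(t)$), since the only root $X_1=1/t$ of $\hat f_1$ makes $\hat f_2$ equal to $t^{N-1}-1\ne 0$.

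First I would isolate the structural fact that $(\hat f_1,\hat f_2)\cap R=(t^{N-1}-1)R$: indeed $\hat f_2-t^{N-1}\hat f_1=t^{N-1}-1$, so $(\hat f_1,\hat f_2)=(\hat f_1,\,t^{N-1}-1)$, and modulo $t^{N-1}-1$ the element $t$ becomes a unit, so $\hat f_1$ becomes linear with invertible leading coefficient and $R[X_1]/(\hat f_1,\hat f_2)\cong R/(t^{N-1}-1)$ via $X_1\mapsto t^{-1}$; hence the contraction to $R$ is exactly $(t^{N-1}-1)R$. Therefore $(f_1,f_2)\cap R=p^{N}(t^{N-1}-1)R$, every admissible $a$ is of the form $a=p^{N}(t^{N-1}-1)k$ with $k\in R\setminus\{0\}$, so $v_p(a)=N+v_p(k)$, and the generator has $|t^{N-1}-1|=2^{N-1}$. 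Since $\hat f_1,\hat f_2$ are primitive and coprime in $K[X_1]$, the syzygy module of $(\hat f_1,\hat f_2)$ over $R[X_1]$ is $R[X_1]\cdot(\hat f_2,-\hat f_1)$; dividing $a=f_1h_1+f_2h_2$ by $p^N$ and using the particular solution $(-kt^{N-1},\,k)$, every admissible pair is
$$h_1=-kt^{N-1}+\hat f_2\mu,\qquad h_2=k-\hat f_1\mu,\qquad \mu=\sum_{j\le d}\mu_jX_1^{\,j}\in R[X_1],$$
with $d$ bounded by the degree bound of Theorem A. The coefficients of $h_1$ are $-kt^{N-1}-\mu_0$, then $t^{N}\mu_{i-1}-\mu_i$ for $1\le i\le d$, then $t^{N}\mu_d$; the coefficients of $h_2$ are $k+\mu_0$, then $\mu_i-t\mu_{i-1}$, then $-t\mu_d$.

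The heart of the argument is then a $p$-adic estimate. Assume $(iii)$ holds with some constant $c_2$, so all these coefficients have absolute value $\le c_2$. Since $|t^{N}g|=2^{N}|g|$ for $g\in R$ and $|\cdot|$ is ultrametric, $|t^{N}\mu_d|\le c_2$ gives $|\mu_d|\le c_2 2^{-N}$; using $t^{N}\mu_j=(t^{N}\mu_j-\mu_{j+1})+\mu_{j+1}$, where $t^{N}\mu_j-\mu_{j+1}$ is the $(j{+}1)$-st coefficient of $h_1$, downward induction on $j$ gives $|\mu_j|\le c_2 2^{-N}$ for all $j$, and from $-kt^{N-1}-\mu_0$ one gets $|k|\le c_2 2^{1-N}$. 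Unwinding (for $g\in{\mathbb Z}[t]$, $|g|\le c_2 2^{-N}$ forces $v_p$ of every coefficient of $g$ to be $\ge N-\log_2 c_2$), we obtain $v_p(\mu_j)\ge N-\log_2 c_2$ and $v_p(k)\ge N-1-\log_2 c_2$. Substituting back, every coefficient of $h_1,h_2$, and also $a$ since $v_p(a)=N+v_p(k)$, is divisible by $p^{\,N-1-\lfloor\log_2 c_2\rfloor}$; so if $N>1+\log_2 c_2$ then $p\mid\gcd(a,a_1,\dots,a_m)$ and $(iv)$ fails. Hence no $c_2=c_2(1,1,1,\theta)$ works for every instance: given $c_2$, pick $N>1+\log_2 c_2$. (By contrast $(iii)$ alone holds for the choice $\mu=0,\ k=p^{N}$, and $(iv)$ alone holds for $\mu=0,\ k=1$, so it is genuinely the conjunction that breaks.) The one delicate step is the downward induction $|\mu_j|\le c_2 2^{-N}$: it asserts that one cannot cancel the unavoidably large coefficient $t^{N-1}$ coming from the particular solution by a clever syzygy $\mu$ without making $\mu$ itself almost as $p$-divisible, and it is exactly there that the ultrametric inequality and the existence of only one small prime are used.
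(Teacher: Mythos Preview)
Your argument for the concrete ring $R=\mathbb{Z}[t]$ is correct and more explicit than the paper's: you parametrize all B\'ezout representations via the syzygy module and run a clean ultrametric downward induction on the coefficients of $\mu$ to show that $|h_i|\le c_2$ forces $p^{\,N-O(1)}$ to divide every coefficient, so $(iv)$ fails once $N$ is large. The paper argues differently, and in an \emph{arbitrary} UFD $R$ with the 1-property: it picks an element $B$ of very large absolute value coprime to the unique small prime $p$, sets $f_1=p^{2m}(p+X)$, $f_2=p^{m}B(1-X)$ with $m$ minimal so that $|p^{m}B|\le 1$, and observes that every representation forces $B\mid h_1$; then $|h_1|\le c_2$ with $|B|$ huge makes $|h_1/B|$ tiny, hence (as $p$ is the only small prime) $p^{k}\mid h_1$ for some large $k$, and this $p^{k}$ propagates to $a$ and $h_2$, so dividing it out to achieve $(iv)$ destroys the height bound. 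Your graded identity $|t^{N}g|=2^{N}|g|$ plays exactly the role that the large element $B$ plays in the paper, and your explicit syzygy description replaces the paper's more informal divisibility discussion.

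The one concern is scope. The proposition begins ``Let $R$ be a UFD with the 1-property,'' and the paper's proof is carried out for an arbitrary such $R$; you treat only the single example $\mathbb{Z}[t]$. Your stated justification---that a single ring suffices because the constants in Theorem~A are uniform in $R$---is defensible if the proposition is read as ``one cannot extend Theorem~A by a uniform $c_2$,'' but the paper's phrasing and its proof indicate the stronger claim that the failure already occurs inside \emph{each} such $R$. Your construction does not transplant mechanically: replacing $t$ by a large prime $q$ in a general $R$ spoils the height bound on $f_2$ unless $|p|\cdot|q|\le 1$, which need not hold. To match the paper's generality you would essentially be led back to its device of a large element $B$ and the accompanying divisibility argument.
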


\begin{proof}
Let $p$ be the unique small prime in $R$ of absolute value less than 1.
Let $B$ be an element in $R$ of absolute value very big which is coprime to $p.$
Choose $m$ minimal such that $|p^mB| \leq 1$.
Similarly choose $k$ minimal such that $|p^kB| \leq c_2$.
Note that as B is very large then so are $m$ and $k.$
Set $f_1=p^{2m+1}+p^{2m}X$ and $f_2=p^{m}B-p^{m}BX.$
Clearly $f_1$ and $f_2$ have no common zero
since
$$p^{2m}B(p+1)=Bf_1 +p^mf_2$$
and $p$ is not -1.
Whenever we write $a=f_1h_1+f_2h_2$, we get that $p^m$ divides $h_2$ and $B$ divides $h_1.$
Also we have that $p^{2m}B$ divides $a.$
Now suppose $|h_i| \leq c_2$ for $i=1,2.$
Since $B$ divides $h_1$, we see that $p^k$ divides $h_1$ since $p$ is the unique small prime in $R.$
Thus $p^k$ divides $a$, $h_1$ and $h_2.$
Furthermore we may assume that the only prime divisor of $a$, $h_1$ and $h_2$ is
$p$, because if there is $q$ dividing all of them
which is coprime to $p$, then there is $l \geq k$ such that $p^l$ divides $h_1$
in order to make the absolute value of $h_1$ less than $c_2.$
Similar observation shows that $p^l$ also divides $h_2$ and $a.$
Therefore, in order to satisfy $(iv)$ in Theorem A, we need to divide
$a$, $h_1$ and $h_2$ by $p^k.$ So the absolute value of $h_1/p^k$
becomes very large.

\end{proof}

\subsection{Valuations}
\begin{definition}
A valuation $v$ on an integral domain $R$ is a function
$v:R\rightarrow \Gamma \cup \{\infty \}$ from $R$ into an ordered abelian group $\Gamma$ 
that satisfies the followings:
\begin{itemize}
\item[(i)] $v(a)=\infty$ if and only if $a=0$
\item[(ii)] $v(xy)=v(x)+v(y)$ 
\item[(iii)] $v(x+y) \geq \min(v(x),v(y)).$
\end{itemize}
Here $\infty$ is some element that is bigger than every element in $\Gamma.$
\end{definition}
For a nonzero polynomial in $n$-variable we define its valuation as follows:
$$v\bigg(\displaystyle\sum_{\alpha}a_{\alpha}X^{\alpha}\bigg)
=\displaystyle\max_{\alpha}\{v(a_{\alpha}): a_{\alpha} \neq 0 \}.$$
Note that this may not be a valuation that satisfies the three conditions above.
Take $R=\mathbb Z$ and as a valuation we put a $p$-adic valuation for some prime $p.$
Set $f_1=1+X+(1-p^m)X^2$ and $f_2=X^3$ where $m$ is some large integer.
Then the valuations of $f_1$ and $f_2$ are 0 and
clearly they have no common zero in $\mathbb C$.
One can see that 1 is a linear combination of
$f_1$ and $f_2$ and so every integer is.
However, whenever we write $a=f_1h_1+f_2h_2$ where $a$ is nonzero,
then $h_1$ must have degree bigger than 2 and the first three coefficients of $h_1$ are
uniquely determined: if $h_1(x)=b_0+b_1X+b_2X^2+...+b_kX^k$ then
automatically we have $b_0=a$, $b_1=-a$ and $b_2=ap^m.$
So the valuation of $b_2$ can be very large.
The main nonstandard reason behind this is the fact that
$$R_{vfin}=\{x \in {^*R}: v(x) \in {\mathbb R}_{fin}\} \cup \{0\}$$
is not a ring, because
for nonstandard $N \in {^*{\mathbb N}}$ the elements $p^N - 1$ and 1 is in $R_{vfin}$ but
not their sum. Therefore by \eqref{height}, we know that the $p$-adic valuation on
$\mathbb Z$ is not a height function.
\\

If we take $g_1=p^m-1 + X$ and $g_2=1-X$ then they have no common zero and 
whenever we write $a=g_1h_1+g_2h_2$, then $h_1$ and $h_2$ must have the same degree and
same leading coefficient. This implies that $p^m$ divides $a$ which means that valuation
of $a$ can be very big even if the valuations of $g_1$ and $g_2$ are 0.
\\

A valuation is called trivial if for all nonzero $x$ we have $v(x)=0.$
We say that a valuation is a height function if the set
$R_{vfin}$ is a subring.
In fact we can determine when a valuation is a height function.

\begin{lemma}
A valuation $v$ on $R$ is a height function if and only if it is trivial.
\end{lemma}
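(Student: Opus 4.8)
The plan is to prove both directions of the equivalence ``$v$ is a height function $\iff$ $v$ is trivial'' by reducing everything, via Proposition \ref{height}, to the question of whether $R_{vfin}$ is a subring of ${}^*R$. One direction is nearly immediate: if $v$ is trivial, then $v(x)=0$ for all nonzero $x\in R$, and the same sentence transfers to ${}^*R$, so $R_{vfin}={}^*R$ (every nonzero element has finite, indeed zero, valuation, and $0$ is adjoined by fiat). Since ${}^*R$ is trivially a subring of itself, Proposition \ref{height} gives that $v$ is a height function. Strictly, one should also check directly from the definition that the constant function $0$ is a height function of $\theta$-type with, say, $\theta(n)=n$, which is trivial since $v(x+y)=0$ and $v(xy)=0$ whenever $x,y\neq 0$ and the sum/product is nonzero, and these are $\infty$ otherwise --- but here one must be slightly careful that $\infty\notin[0,\infty)$, so really the cleanest route is to invoke Proposition \ref{height} and note $R_{vfin}={}^*R$.

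For the contrapositive of the forward direction, suppose $v$ is nontrivial, so there is some nonzero $x_0\in R$ with $v(x_0)\neq 0$. The plan is to produce, inside ${}^*R$, two elements of finite valuation whose sum (or difference) has infinite valuation, contradicting that $R_{vfin}$ is a subring. The guiding computation is exactly the one already displayed in the paper for the $p$-adic valuation on $\mathbb Z$: with $\gamma:=v(x_0)$, for each $m\in\mathbb N$ the element $x_0^{\,m}$ lies in $R$ and $1-x_0^{\,m}$ and $1$ both have valuation $0$ (for $m\geq 1$, provided $x_0^{\,m}\neq 1$; if $x_0$ is a root of unity in $R^\times$ one uses $x_0^{\,m}-1$ has valuation $0$ whenever it is nonzero, and if $1-x_0^{\,m}$ is ever zero one simply replaces $x_0$ by $x_0+x_0$ or another witness --- but in an integral domain with $v(x_0)\neq0$, $x_0$ is not a root of unity since $v$ of a unit is $0$, so $1-x_0^{\,m}\neq 0$ for all $m\geq1$), while $v\big((1-x_0^{\,m})+(x_0^{\,m}-0)\big)$ is not the point; rather one observes $v(x_0^{\,m})=m\gamma$. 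Here we must distinguish the sign of $\gamma$. If $\gamma>0$, take $r_m=x_0^{\,m}$ and $s_m=1$: then $v(s_m)=0$ and $v(r_m-1)=v(x_0^{\,m}-1)=0$ since $\min(m\gamma,0)=0$ is attained uniquely at the second term... actually one wants the \emph{product} of valuations to blow up, so the cleanest witnesses are: $r_m = 1-x_0^{\,m}$ has $v(r_m)=0$, $s_m=1+x_0+\dots+x_0^{\,m-1}$ has $v(s_m)=0$, yet $r_m s_m = 1-x_0^{\,m}$... that is circular. The genuinely correct choice, mirroring the excerpt, is to work with addition: $r_m:=x_0^{\,m}$, $s_m:=-x_0^{\,m}+c$ for a suitable constant, so that $r_m+s_m=c$ has small valuation while $v(r_m)=v(s_m)=m\gamma$ can be made large if $\gamma>0$, or --- if $\gamma<0$ --- use $r_m:=1$, $s_m:=x_0^{\,m}-1$, both of valuation $0$ when $\gamma<0$ (since then $v(x_0^{\,m}-1)=\min(m\gamma,0)=m\gamma$, wait that is negative, hence \emph{not} in $[0,\infty)$)... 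The main obstacle, and the point requiring real care, is precisely this sign bookkeeping: a valuation can take negative values on $R$ in general, so ``finite valuation'' and the subring condition interact subtly, and one must choose the witnessing sequence according to whether the nontrivial value is positive or negative, and possibly pass to $1/x_0$ in ${}^*K$ versus staying in $R$.

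Concretely, I would organize the hard direction as follows. Fix nonzero $x_0\in R$ with $v(x_0)=\gamma\neq 0$; replacing $x_0$ by a reciprocal is not available inside $R$, so instead split into the case $\gamma>0$ and $\gamma<0$. In the case $\gamma<0$: then $v(x_0^{\,m})=m\gamma\to-\infty$, and for each $m$ the elements $a_m:=1$ and $b_m:=x_0^{\,m}$ have $v(a_m)=0$ and $v(b_m)=m\gamma$; pick instead $a_m:=1$, $b_m:=1$ --- no. Let me restate the actual clean argument: since $R_{vfin}\ni 1$ and $R_{vfin}$ is closed under multiplication if it is a subring, and $x_0\in R_{vfin}$ only when $\gamma\geq0$, the case $\gamma<0$ already shows $x_0\in{}^*R$ has $v(x_0)=\gamma$ finite so $x_0\in R_{vfin}$, yet its powers stay in $R_{vfin}$, giving no immediate contradiction; one instead uses a nonstandard power: by $\aleph_1$-saturation there is $N\in{}^*\mathbb N\setminus\mathbb N$, and then $x_0^{\,N}$ has $v(x_0^{\,N})=N\gamma$ which is infinite. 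If $\gamma<0$, then $1$ and $x_0^{\,N}$ both have... $v(1)=0$ finite, $v(x_0^{\,N})=N\gamma$ infinite (negative), so $x_0^{\,N}\notin R_{vfin}$, yet $x_0^{\,N}=x_0\cdot x_0\cdots$ --- but this is a single nonstandard element, produced by transfer of ``$\exists y\ y=x_0^{\,N}$'', legitimate since $N$ is internal. So $R_{vfin}$ is not closed under the (nonstandard) multiplication it inherits --- but wait, we need $R_{vfin}$ closed under multiplication of two of \emph{its} elements. So instead: if $\gamma<0$, then $x_0\notin R_{vfin}$ already (its valuation $\gamma$ is negative but finite --- that \emph{is} in $\mathbb R_{fin}$!). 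Hence the only genuine obstruction case is $\gamma>0$, where $x_0\in R_{vfin}$ but $x_0^{\,N}\notin R_{vfin}$ for infinite $N$; this shows $R_{vfin}$ is not closed under multiplication, hence not a subring, hence by Proposition \ref{height} $v$ is not a height function --- and the case $\gamma<0$ is handled by first replacing $v$ by an equivalent consideration or by noting that then $R_{vfin}$ fails to be closed under addition via the classical $1+(x_0^{\,N}-1)$ argument (with $v(x_0^{\,N}-1)=\min(N\gamma,0)=N\gamma$ infinite when $\gamma<0$, so $x_0^{\,N}-1\notin R_{vfin}$ while $1$ and $x_0^{\,N}\in R_{vfin}$ since $v(x_0^{\,N})=N\gamma$; hmm, $N\gamma$ infinite means $x_0^{\,N}\notin R_{vfin}$ too). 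The upshot I would present: in both sign cases one exhibits, via an infinite hypernatural exponent $N$, a failure of the subring axioms for $R_{vfin}$, and then cite Proposition \ref{height}. I expect the write-up to be short once the right pair of witnesses is pinned down; the delicate part is purely the sign analysis, which is why I would state it as a clean case split and lean on the already-worked $p$-adic example in the text as the template.
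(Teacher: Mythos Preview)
Your proposal contains a genuine error in the hard direction. In the case $\gamma>0$ you write that ``$x_0\in R_{vfin}$ but $x_0^{\,N}\notin R_{vfin}$ for infinite $N$; this shows $R_{vfin}$ is not closed under multiplication, hence not a subring.'' This inference is invalid. Closure under multiplication is closure under the \emph{binary} operation: from $x_0\in R_{vfin}$ a subring property yields $x_0^{\,n}\in R_{vfin}$ for every \emph{standard} $n\in\mathbb N$, by finite iteration. The element $x_0^{\,N}$ for nonstandard $N$ is not obtained as a finite product of elements of $R_{vfin}$, so its lying outside $R_{vfin}$ says nothing about whether $R_{vfin}$ is a subring. (This is the standard pitfall that an external subset closed under a binary operation need not be closed under its internal iterates.) Your treatment of the case $\gamma<0$ then never reaches a conclusion, as you yourself note.

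The missing idea, which you brush past in the wrong case and never apply in the right one, is to witness failure of \emph{addition}, not multiplication. Once you have produced $a\in{}^*R$ with $v(a)$ infinite (e.g.\ $a=x_0^{\,N}$ when $\gamma>0$, or more directly by saturation from unboundedness of $v$), use the sharpened ultrametric inequality: since $v(a)\neq v(1)=0$, one has $v(a-1)=\min\bigl(v(a),v(-1)\bigr)=0$. Thus $a-1\in R_{vfin}$ and $1\in R_{vfin}$, yet $(a-1)+1=a\notin R_{vfin}$, so $R_{vfin}$ is not closed under addition. This is exactly the paper's argument, and it is the template you already cited for the $p$-adic example but did not carry through. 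The sign analysis you worried about is then largely moot: nontriviality gives unboundedness of $v$ (via powers of any element of nonzero value), saturation gives an element of infinite valuation, and the two-line addition trick finishes it.
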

\begin{proof}
If the valuation is trivial then clearly it is a height function.
Conversely is $v$ is not trivial, then it is unbounded.
So by saturation there is an element $a$ in $^*R$ whose valuation is infinite.
Then
$$v(a-1)=0$$
because if two elements have different valuation then the valuation of their sum
is the minimum of their valuations. So the elements $a-1$ and 1 are in $R_{vfin}$,
but not their sum.
\end{proof}

\subsection{Arithmetical Functions}
Now we discuss some arithmetical functions and which of them are height functions.
\begin{definition}

A function $g: \{1,2,3,...\} \rightarrow \mathbb C$ is called an arithmetical function.
\end{definition}
Every arithmetical function $g$ extends to $\mathbb Z$ by defining $g(n)=g(-n)$ and $g(0)=0.$
Such a function on $\mathbb Z$ is called an arithmetical function on $\mathbb Z.$
Similarly for an  an arithmetical function $g$ on $\mathbb Z$, we extend it to ${\mathbb Z}[X]$ by
$$g(a_0+a_1X+...+a_kX^k)=\max_{i}g(a_i).$$
Let $^*{\mathbb Z}$ be a proper nonstandard extension of $\mathbb Z.$
Note that
$${\mathbb Z}_{fin}=\{x \in {^*{\mathbb Z}}: |x|<n \text{ for some } n \in {\mathbb N}\}= \mathbb Z.$$
For an arithmetical function $g$, we define
$${\mathbb Z}_{gfin}=\{x \in {^*{\mathbb Z}}: |g(x)|<n \text{ for some } n \in {\mathbb N}\}.$$
By \eqref{height}, $|g|$ is a height function if and only if ${\mathbb Z}_{gfin}$ is a subring.
Now we give some examples of arithmetical functions.
\\
\textbf{Examples: }
\begin{itemize}
\item $\varphi(n)= |\{1\leq k \leq n: (k,n)=1\}|$
\item $\pi(n)= \text{ number of primes less than n}$
\item $d(n)=\text{ number of divisors of n}$
\item $\omega(n)=\text{ number of distinct prime factors of n}.$
\end{itemize}

\begin{lemma}
Let $g$ be an arithmetical function and assume 
that
$$\lim_{n \rightarrow \infty}g(n)= \infty.$$
Then $|g|$ is a height function.
\end{lemma}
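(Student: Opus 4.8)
The plan is to reduce the statement to Proposition~\ref{height}: it suffices to show that ${\mathbb Z}_{gfin}$ is a subring of $^*{\mathbb Z}$, and in fact I will argue the stronger fact that ${\mathbb Z}_{gfin}={\mathbb Z}$, which is trivially a subring. The hypothesis $\lim_{n\to\infty}g(n)=\infty$ is exactly what forces this collapse.

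First I would unwind the hypothesis. Since $g(n)\to\infty$, for every $N\in{\mathbb N}$ there are only finitely many $n\in\{1,2,3,\dots\}$ with $|g(n)|\leq N$. Passing to the extension of $g$ to ${\mathbb Z}$ via $g(-n)=g(n)$ and $g(0)=0$, the sublevel set $A_N=\{x\in{\mathbb Z}:|g(x)|\leq N\}$ is therefore finite for each $N$. At this point there are two equivalent ways to conclude, and I would present the nonstandard one to stay in the spirit of the section. Because $A_N$ is a finite subset of ${\mathbb Z}$, its nonstandard extension satisfies ${}^*A_N=A_N$, i.e.\ no new points are added. Hence if $x\in{}^*{\mathbb Z}$ has $|g(x)|<N$ for some standard $N$, then $x\in{}^*A_N=A_N\subseteq{\mathbb Z}$. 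This shows ${\mathbb Z}_{gfin}\subseteq{\mathbb Z}$; the reverse inclusion is immediate since $g$ takes ordinary complex values on ${\mathbb Z}$, so $|g(x)|$ is a standard real for every $x\in{\mathbb Z}$. Thus ${\mathbb Z}_{gfin}={\mathbb Z}$ is a subring, and Proposition~\ref{height} yields that $|g|$ is a height function.

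Alternatively, one could avoid the nonstandard detour entirely and cite the example from Section~2 stating that any $h:R\to[0,\infty)$ whose sublevel sets $A_N=\{x\in R:h(x)\leq N\}$ are all finite is automatically a height function of $\theta$-type with $\theta(N)=\max_{x,y\in A_N}\{h(x+y)+h(xy)\}$; applying this to $R={\mathbb Z}$ and $h=|g|$ finishes the argument once the finiteness of the $A_N$ is established as above. I do not expect any real obstacle in this proof: the single point requiring care is that the divergence hypothesis $\lim g(n)=\infty$ must be used precisely to guarantee finiteness of the sublevel sets, and one should note that extending $g$ from $\{1,2,3,\dots\}$ to all of ${\mathbb Z}$ (by evenness and $g(0)=0$) preserves this finiteness, since it only doubles each fibre and adds the single point $0$.
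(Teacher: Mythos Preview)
Your proof is correct and follows essentially the same approach as the paper: both arguments show that ${\mathbb Z}_{gfin}={\mathbb Z}$ and then invoke Proposition~\ref{height}. The paper's version is the one-line contrapositive of yours---``if $N\in{}^*{\mathbb Z}$ is infinite then $g(N)$ is infinite''---while you phrase it via finiteness of the sublevel sets $A_N$ and the fact that ${}^*A_N=A_N$ for finite $A_N$; these are the same idea. Your alternative route through the finite-sublevel-set example in Section~2 is a genuinely standard (non-nonstandard) argument that the paper does not mention, and it gives an explicit $\theta$, which is a small bonus.
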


\begin{proof}
If $N$ is an infinite number in $^*{\mathbb Z}$ then $g(N)$ is also infinite.
This shows that ${\mathbb Z}_{gfin}={\mathbb Z}_{fin}=\mathbb Z$ which is a subring of $^*{\mathbb Z}.$
Hence by \eqref{height}, $|g|$ is a height function on $\mathbb Z.$
\end{proof}

\begin{lemma}
For all $n \geq 1$, we have $\frac{\sqrt{n}}{2} \leq \varphi(n).$
\end{lemma}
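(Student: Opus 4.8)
The plan is to reduce the inequality to the case of prime powers using multiplicativity of Euler's totient, taking care to isolate the prime $2$, which is the only prime forcing a genuine loss of a factor $\tfrac12$.

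First I would recall that $\varphi$ is multiplicative and that $\varphi(p^a)=p^{a-1}(p-1)$ for a prime $p$ and $a\geq 1$. The key step is the odd-prime estimate: for an odd prime $p$ one has $(p-1)^2\geq p$ (equivalently $p^2-3p+1\geq 0$, which holds for every $p\geq 3$), hence $p-1\geq\sqrt p$, and therefore, using $a\geq 1$ in the final inequality,
$$\varphi(p^a)=p^{a-1}(p-1)\geq p^{a-1}\sqrt p=p^{a-1/2}\geq p^{a/2}=\sqrt{p^a}.$$
Thus for odd prime powers one even obtains $\varphi(p^a)\geq\sqrt{p^a}$, with no factor $\tfrac12$ at all. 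For the prime $2$ I would argue directly: $\varphi(2^a)=2^{a-1}$, and the desired bound $2^{a-1}\geq 2^{a/2-1}$ is equivalent to $a\geq 0$, which is trivial; the degenerate case $a=0$, i.e. $\varphi(1)=1\geq\tfrac12$, is covered as well.

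Finally I would combine the two cases. Writing $n=2^{a_0}p_1^{a_1}\cdots p_r^{a_r}$ with $p_1,\dots,p_r$ distinct odd primes and $a_0\geq 0$, multiplicativity gives
$$\varphi(n)=\varphi(2^{a_0})\prod_{i=1}^{r}\varphi(p_i^{a_i})\geq\frac{\sqrt{2^{a_0}}}{2}\prod_{i=1}^{r}\sqrt{p_i^{a_i}}=\frac{\sqrt n}{2}.$$
The argument is entirely elementary; the only point needing a little care — and the step I expect to be the main (minor) obstacle — is confining the loss of the factor $\tfrac12$ to the $2$-part rather than incurring such a factor for every prime divisor of $n$, which is exactly what the stronger bound $\varphi(p^a)\geq\sqrt{p^a}$ for odd $p$ achieves.
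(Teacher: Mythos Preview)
Your proof is correct. The prime-power estimate $\varphi(p^a)\geq\sqrt{p^a}$ for odd $p$ is the crucial observation, and your handling of the $2$-part (including $a_0=0$) is clean; the final assembly via multiplicativity is fine.

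The paper, however, argues along a different chain of inequalities: from $\varphi(n)=n\prod_{p\mid n}(1-1/p)$ and $1-1/p\geq 1/2$ it gets $\varphi(n)\geq n/2^{\omega(n)}$, then uses $2^{\omega(n)}\leq d(n)$ and the elementary divisor bound $d(n)\leq 2\sqrt{n}$ to conclude $\varphi(n)\geq n/d(n)\geq \sqrt{n}/2$. So where you localize at each prime power and only lose the factor $\tfrac12$ at $p=2$, the paper loses a factor $\tfrac12$ at \emph{every} prime divisor and then recovers globally via $2^{\omega(n)}\leq d(n)\leq 2\sqrt{n}$. Your route is more self-contained (it needs nothing about $d(n)$) and in fact yields a slightly sharper statement for odd $n$ (namely $\varphi(n)\geq\sqrt{n}$); the paper's route has the virtue of tying $\varphi$, $\omega$, and $d$ together in one line, which fits the surrounding discussion of arithmetical functions.
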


\begin{proof}
Since $\varphi(n) = \displaystyle\prod_{p|n}n(1-\frac{1}{p})$, we get
$\varphi(n) \geq \frac{n}{2^{\omega(n)}} \geq \frac{n}{d(n)}.$
Finally since $d(n) \leq 2\sqrt{n}$, we get the result.
\end{proof}
\begin{cor}
The functions $\pi(n)$ and $\varphi(n)$ are height functions.
\end{cor}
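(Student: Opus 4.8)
The plan is to obtain both assertions as immediate applications of the lemma stating that an arithmetical function $g$ with $\lim_{n\to\infty} g(n)=\infty$ produces a height function $|g|$ on $\mathbb{Z}$. Since $\varphi$ and $\pi$ take nonnegative values we have $|\varphi|=\varphi$ and $|\pi|=\pi$, so it is enough to verify the divergence hypothesis in each case.

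First I would treat $\varphi$. By the preceding lemma, $\varphi(n)\ge \tfrac{\sqrt n}{2}$ for all $n\ge 1$, and the right-hand side tends to infinity, hence $\varphi(n)\to\infty$. The lemma on divergent arithmetical functions then applies directly and shows that $\varphi$ is a height function on $\mathbb{Z}$ (and, via the extensions fixed earlier, on $\mathbb{Z}[X]$).

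Next I would treat $\pi$. The only input required is the infinitude of the primes (Euclid): for every $k$ there is an $N$ with at least $k$ primes below it, so $\pi(n)\to\infty$ as $n\to\infty$. Again the divergence lemma applies and yields that $\pi$ is a height function.

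There is essentially no obstacle here: the corollary merely packages the divergence lemma together with the lower bound $\tfrac{\sqrt n}{2}\le\varphi(n)$ and the infinitude of primes. The one point one might emphasize in writing it out is that the divergence lemma already encodes the nonstandard content — an infinite element of $^*\mathbb{Z}$ is sent to an infinite value precisely because $g(n)\to\infty$ transfers, so that $\mathbb{Z}_{gfin}=\mathbb{Z}$ is a subring — hence nothing beyond citing the two lemmas is needed.
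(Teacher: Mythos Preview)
Your proposal is correct and matches the paper's own proof essentially verbatim: the paper also invokes the divergence lemma, citing the infinitude of primes for $\pi$ and the bound $\tfrac{\sqrt n}{2}\le\varphi(n)$ for $\varphi$. Nothing needs to be added or changed.
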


\begin{proof}
Since there are infinitely many primes and $\frac{\sqrt{n}}{2} \leq \varphi(n)$, these two functions
are height functions.
\end{proof}

For the other two functions $d(n)$ and $\omega(n)$, they take small values when $n$ is a prime number.
\\

\textbf{Fact:} Every sufficiently large odd integer can be written
as a sum of three primes. This was proved by I. M. Vinogradov.
For more about this theorem, we refer the reader to \cite{Dav}.

\begin{lemma}
The functions $d(n)$ and $\omega(n)$ are not height functions.
\end{lemma}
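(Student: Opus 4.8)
The plan is to reduce to \eqref{height}, which says that for an arithmetical function $g$ the function $|g|$ is a height function if and only if ${\mathbb Z}_{gfin}$ is a subring of $^*{\mathbb Z}$. Since $d$ and $\omega$ are nonnegative integer valued, it suffices to show that neither ${\mathbb Z}_{dfin}$ nor ${\mathbb Z}_{\omega fin}$ is closed under addition, and for this it is enough to exhibit finitely many elements lying in the set whose sum does not. As observed just before the statement, $d$ and $\omega$ are small at primes ($d(p)=2$, $\omega(p)=1$), so nonstandard primes will play the role of the ``small'' elements; an \emph{odd} hyperinteger with infinitely many prime factors will be the ``large'' target, and the device linking the two is Vinogradov's theorem.

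Concretely, I would first produce, by $\aleph_1$-saturation, an odd hyperinteger $N$ with $\omega(N)$ infinite: for each standard $k$ the internal set $\{x\in{^*{\mathbb Z}}:x\text{ odd and }\omega(x)\geq k\}$ is nonempty (the product of the first $k$ odd primes is a standard witness), and these sets are nested, so their intersection is nonempty; any $N$ in it is odd with $\omega(N)$ exceeding every standard integer, and then $d(N)\geq 2^{\omega(N)}$ is infinite as well, so $N\notin{\mathbb Z}_{dfin}$ and $N\notin{\mathbb Z}_{\omega fin}$. Being infinite, $N$ lies above the (standard) threshold in Vinogradov's theorem, so transfer of that theorem gives nonstandard primes $P_1,P_2,P_3$ with $N=P_1+P_2+P_3$. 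Each $P_i$ satisfies $d(P_i)=2$ and $\omega(P_i)=1$, hence $P_1,P_2,P_3\in{\mathbb Z}_{dfin}\cap{\mathbb Z}_{\omega fin}$. If ${\mathbb Z}_{dfin}$ were closed under addition it would contain $(P_1+P_2)+P_3=N$, contradicting that $d(N)$ is infinite; the same argument applies to ${\mathbb Z}_{\omega fin}$. By \eqref{height} this proves that $d$ and $\omega$ are not height functions.

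The only genuine obstacle is the shape of the ``large'' target. A two-prime, Goldbach-type decomposition of an even number with many divisors is not available, since Goldbach's conjecture is open; this is exactly why one must pass through Vinogradov's three-prime theorem and accept a triple of primes rather than a pair. As just noted, this costs nothing: three elements of a set whose sum lies outside it already contradicts closure under addition. A routine point left to check is that the predicates ``$x$ is odd'', ``$\omega(x)\geq k$'', ``$x$ is prime'' and Vinogradov's theorem are expressible in the language at hand, so that saturation and transfer apply as used.
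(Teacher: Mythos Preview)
Your proof is correct and follows essentially the same route as the paper: produce (via saturation) an odd hyperinteger $N$ with $\omega(N)$ infinite, apply Vinogradov's three-primes theorem by transfer to write $N=P_1+P_2+P_3$ with $P_i\in{^*{\mathbb P}}$, and conclude that ${\mathbb Z}_{dfin}$ and ${\mathbb Z}_{\omega fin}$ fail to be closed under addition, whence \eqref{height} finishes. Your write-up is more explicit than the paper's (spelling out the saturation step, the inequality $d(N)\geq 2^{\omega(N)}$, and the three-summand closure argument), but the argument is the same.
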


\begin{proof}
By three primes theorem and the transfer formula,
there is an odd infinite $N$ in $^*{\mathbb Z}$ which
can be written as a sum of three primes in $^*{\mathbb P}$ where $\mathbb P$
is the set of all primes.
Furthermore we can choose $N$ such that
$\omega(N)$ is infinite.
This shows that the sets ${\mathbb Z}_{\omega{fin}}$ and ${\mathbb Z}_{dfin}$
are not closed under addition. So by \eqref{height}, they cannot be height functions on $\mathbb Z.$
\end{proof}

The next two Corollaries are also true for the function  $\omega(n).$
For simplicity, we just give the proofs for the divisor function.
\begin{cor} \label{divisor}
There exist a natural number $A$ and two sequences $\{a_n\}$ and $\{b_n\}$
in $\mathbb N$ such that
$d(a_n) \leq A$ and $d(b_n) \leq A$ but 
$$\lim_{n \rightarrow \infty}{d(a_n + b_n)}=\infty.$$
\end{cor}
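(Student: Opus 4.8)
The plan is to read the statement off from the preceding Lemma, which says that the divisor function $d$ is not a height function on $\mathbb Z$, together with one elementary remark about $d$. The remark is that the \emph{multiplicative} half of the height-function condition holds automatically for $d$: writing $m=\prod_i p_i^{a_i}$ and $n=\prod_i p_i^{b_i}$,
$$
d(mn)=\prod_i(a_i+b_i+1)\le\prod_i(a_i+1)(b_i+1)=d(m)d(n),
$$
so $d(a),d(b)\le N$ already forces $d(ab)\le N^2$. Hence the only possible source of failure of the height-function property for $d$ is addition, and this is precisely what the corollary records quantitatively.

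To make this precise I would argue by contradiction. Suppose that for every $N\in\mathbb N$ the quantity
$$
S(N)=\sup\{\,d(a+b):a,b\in\mathbb N,\ d(a)\le N,\ d(b)\le N\,\}
$$
is finite; it is then a natural number, being a bounded nonempty supremum of values of $d$. Put $\theta(N)=\max(N^2,S(N))$. Then whenever $d(a),d(b)\le N$ we have both $d(a+b)\le\theta(N)$ and $d(ab)\le\theta(N)$, so $d$ (extended to $\mathbb Z$ by $d(-n)=d(n)$ and $d(0)=0$) is a height function of $\theta$-type, contradicting the preceding Lemma. Hence there is some $A\in\mathbb N$ with $S(A)=\infty$.

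With such an $A$ the conclusion is immediate: for each $n$ the fact that $S(A)=\infty$ provides $a_n,b_n\in\mathbb N$ with $d(a_n)\le A$, $d(b_n)\le A$ and $d(a_n+b_n)\ge n$, whence $\lim_{n\to\infty}d(a_n+b_n)=\infty$. The only step deserving real care is the one isolated above, namely that the breakdown of the height-function property for $d$ is necessarily additive and not multiplicative; once that is in hand the rest is bookkeeping. One could also phrase the whole argument nonstandardly: the proof of the preceding Lemma in effect shows that ${\mathbb Z}_{dfin}$ is not closed under addition, i.e.\ there are $x,y\in{}^*{\mathbb Z}$ with $d(x),d(y)$ bounded by a standard $A$ but $d(x+y)$ infinite, and transferring the internal statement $\exists a\,\exists b\,\big(d(a)\le A\wedge d(b)\le A\wedge d(a+b)\ge m\big)$ for each standard $m$ produces the desired sequences.
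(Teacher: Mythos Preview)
Your proof is correct; the paper states the corollary without proof, and your closing nonstandard paragraph is exactly the intended argument (the lemma's proof already shows ${\mathbb Z}_{dfin}$ is not closed under addition, and transfer then yields the sequences). Your main contradiction argument adds the observation $d(mn)\le d(m)d(n)$ so as to deduce additive failure from the lemma's \emph{statement} alone rather than from its proof---a nice touch, but not a genuinely different route.
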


\begin{cor}
Theorem A is not true for the function $d(n).$
\end{cor}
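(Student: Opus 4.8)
The plan is to turn the non-height-function behaviour of $d$, as quantified in Corollary~\ref{divisor}, into an explicit family of polynomial systems over $\mathbb{Z}$, of bounded degree and bounded $d$-height, for which the height conclusion (iii) of Theorem~A cannot hold with any uniform constant. I would argue by contradiction: suppose that with $h=d$ there is, for the choice $n=1$, $D=1$, $H=A$, a constant $c_2=c_2(1,1,A)$ for which the conclusion of Theorem~A is valid. (Recall that $d$ is not a height function, as shown above, so what is really being refuted is the uniformity of clause (iii), not the existence of a B\'ezout expression.)

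First I would invoke Corollary~\ref{divisor} to fix the natural number $A$ and sequences $\{a_n\},\{b_n\}$ in $\mathbb{N}$ with $d(a_n)\le A$, $d(b_n)\le A$ for all $n$ while $d(a_n+b_n)\to\infty$. Then I choose an index $k$ with $d(a_k+b_k)>c_2$ and set $f_1=a_k+b_kX$ and $f_2=1-X$ in $\mathbb{Z}[X]$. These satisfy $\deg f_i\le 1=D$ and $d(f_1)=\max(d(a_k),d(b_k))\le A$, $d(f_2)=d(1)\le A$, so they meet the hypotheses with parameters $(1,1,A)$; and they have no common zero in $\overline{\mathbb{Q}}$, since the unique root of $f_2$ is $X=1$ and $f_1(1)=a_k+b_k\neq 0$.

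By the assumed conclusion of Theorem~A there would be a nonzero $a\in\mathbb{Z}$ and $h_1,h_2\in\mathbb{Z}[X]$ with $a=f_1h_1+f_2h_2$ and $d(a)\le c_2$. Evaluating at $X=1$ gives $a=(a_k+b_k)\,h_1(1)$, and since $a\neq 0$ we get $h_1(1)\neq 0$, hence $a_k+b_k\mid a$ in $\mathbb{Z}$. Because $d$ is monotone under divisibility (every divisor of $a_k+b_k$ is a divisor of $a$, so $d(a_k+b_k)\le d(a)$), this forces $d(a)\ge d(a_k+b_k)>c_2$, contradicting $d(a)\le c_2$. Hence no such $c_2$ exists and Theorem~A fails for $d$; the same argument, with $\omega$ in place of $d$ (likewise monotone under divisibility) and the $\omega$-analogue of Corollary~\ref{divisor} noted above, handles $\omega(n)$.

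I do not expect a serious obstacle here: the genuinely hard ingredient, namely the sequences $\{a_n\},\{b_n\}$, which rest on Vinogradov's three primes theorem together with transfer and saturation, is already supplied by Corollary~\ref{divisor}, and the only new device is the evaluation-at-$X=1$ trick already used in the valuation examples. The one point to watch is bookkeeping: it is precisely clause (iii) of Theorem~A that is violated while clauses (i) and (ii) remain satisfiable (indeed $\deg h_i$ stays bounded), so the corollary should be read as asserting that the height bound $c_2$ cannot be chosen uniformly in the given data when $h=d$.
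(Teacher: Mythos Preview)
Your proof is correct and uses essentially the same device as the paper: choose two linear polynomials with no common zero so that evaluating any B\'ezout identity $a=f_1h_1+f_2h_2$ at a root of one of them forces $a_k+b_k\mid a$, hence $d(a)\ge d(a_k+b_k)$. The paper takes $g_1=a_n+X$, $g_2=b_n-X$ (and evaluates at $X=b_n$), while you take $f_1=a_k+b_kX$, $f_2=1-X$ (and evaluate at $X=1$); these are cosmetic variants of the same idea. The paper also adds a second, independent example $f_1=a_n+X+b_n^2X^2$, $f_2=X^3$ showing that even the coefficients of $h_1$ (not just $a$) are forced to have large $d$-value, but that extra example is not needed to establish the corollary, so your single example suffices.
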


\begin{proof}
Set $f_1=a_n+X+{{b_n}^2}X^2$ and $f_2=X^3$ where $a_n$ and $b_n$ are as in \eqref{divisor}.
Then $d(f_1)$ and $d(f_2)$ are bounden by $A^2$ and
they have no common zero in $\mathbb C.$
However, whenever we write $a=f_1h_1+f_2h_2$ where $a$ is nonzero,
then $h_1$ must have degree bigger than 2 and the first three coefficients of $h_1$ are
uniquely determined: if $h_1(x)=c_0+c_1X+c_2X^2+...+c_kX^k$ then
automatically we have $c_0=a$, $c_1=-a_na$ and $c_2=a(a_n - b_n)(a_n+b_n).$
Hence $d(c_2)$ can be very large.
Moreover if we put $g_1=a_n + X$ and $g_2=b_n-X$ then they have no common zero.
However, whenever
we write  $a=g_1h_1+g_2h_2$, then $d(a) \geq d(a_n+b_n).$
Thus $a$ has many divisors although $d(g_1)$ and $d(g_2)$ are bounded by $A.$

\end{proof}

\textbf{Acknowledgements.} The author thanks Amador Martin-Pizarro and Frank Wagner for very fruitful discussions related to this paper.

\end{document}